\documentclass[11pt, a4paper]{article}
\usepackage{fix-cm}

\usepackage{graphics}
\usepackage{graphicx}
\usepackage{epsfig}
\usepackage{amsmath}
\usepackage{graphics}
\usepackage{graphicx}
\usepackage{epstopdf}
\usepackage{multirow}
\usepackage{booktabs}
\usepackage{subfigure}
\usepackage{ifpdf}
\usepackage{color}
\usepackage{amssymb}
\usepackage{amsthm}
\usepackage{amsmath}
\usepackage{mathrsfs}
\usepackage{bm}
\usepackage[ruled,vlined,linesnumbered,noend]{algorithm2e}
\SetKwInput{KwIn}{Input}
\SetKwInput{KwOut}{Output}

\usepackage[numbers,sort&compress]{natbib}% Citation support using natbib.sty
\bibpunct[, ]{[}{]}{,}{n}{,}{,}% Citation support using natbib.sty
\makeatletter% @ becomes a letter
\def\NAT@def@citea{\def\@citea{\NAT@separator}}% Suppress spaces between citations using natbib.sty
\makeatother% @ becomes a symbol again

\usepackage{xparse}
\makeatletter
\newcommand{\sn@authors}{}
\newcommand{\sn@affils}{}
\newcommand{\sn@abstract}{}
\newcommand{\sn@keywords}{}
\newcounter{sn@authorcount}
\RenewDocumentCommand{\author}{s o m}{%
  \stepcounter{sn@authorcount}%
  \ifnum\value{sn@authorcount}>1
    \g@addto@macro\sn@authors{, }
  \fi
  \g@addto@macro\sn@authors{#3}%
  \IfNoValueTF{#2}{%
    \IfBooleanT{#1}{\g@addto@macro\sn@authors{\textsuperscript{*}}}%
  }{%
    \IfBooleanTF{#1}{%
      \g@addto@macro\sn@authors{\textsuperscript{#2,*}}%
    }{%
      \g@addto@macro\sn@authors{\textsuperscript{#2}}%
    }%
  }%
}
\NewDocumentCommand{\affil}{s o m}{%
  \ifx\sn@affils\@empty
    \IfNoValueTF{#2}{%
      \gdef\sn@affils{#3}%
    }{%
      \gdef\sn@affils{\textsuperscript{#2} #3}%
    }%
  \else
    \IfNoValueTF{#2}{%
      \g@addto@macro\sn@affils{\par#3}%
    }{%
      \g@addto@macro\sn@affils{\par\textsuperscript{#2} #3}%
    }%
  \fi
}
\RenewDocumentCommand{\abstract}{m}{\gdef\sn@abstract{#1}}
\NewDocumentCommand{\keywords}{m}{\gdef\sn@keywords{#1}}
\newcommand{\email}[1]{\g@addto@macro\sn@authors{\par\texttt{#1}}}
\newcommand{\fnm}[1]{#1}
\newcommand{\sur}[1]{#1}
\newcommand{\orgdiv}[1]{#1}
\newcommand{\orgname}[1]{#1}
\newcommand{\orgaddress}[1]{#1}
\newcommand{\city}[1]{#1}
\newcommand{\postcode}[1]{#1}
\newcommand{\country}[1]{#1}
\newcommand{\street}[1]{#1}
\newcommand{\state}[1]{#1}
\renewcommand{\maketitle}{%
  \begin{center}
    {\LARGE \@title\par}
    \vspace{1em}
    {\large \sn@authors\par}
    \vspace{0.8em}
    {\small \sn@affils\par}
  \end{center}
  \ifx\sn@abstract\@empty\else
    \noindent\textbf{Abstract.} \sn@abstract\par\vspace{1em}
  \fi
  \ifx\sn@keywords\@empty\else
    \noindent\textbf{Keywords:} \sn@keywords\par\vspace{1em}
  \fi
}
\makeatother

\newcommand\numberthis{\addtocounter{equation}{1}\tag{\theequation}}

\theoremstyle{plain}% Theorem-like structures provided by amsthm.sty
\newtheorem{theorem}{Theorem}[section]
\newtheorem{lemma}[theorem]{Lemma}
\newtheorem{corollary}[theorem]{Corollary}
\newtheorem{proposition}[theorem]{Proposition}
\newtheorem{problem}[theorem]{Problem}

\theoremstyle{definition}
\newtheorem{definition}[theorem]{Definition}
\newtheorem{example}[theorem]{Example}

\theoremstyle{remark}

\newcommand{\R}{\mathbb{R}}
\newcommand{\C}{\mathbb{C}}

\newcommand{\mX}{\mathscr{X}}
\newcommand{\mY}{\mathscr{Y}}
\newcommand{\mW}{\mathscr{W}}
\newcommand{\mH}{\mathscr{H}}
\newcommand{\mA}{\mathscr{A}}
\newcommand{\mB}{\mathscr{B}}
\newcommand{\mC}{\mathscr{C}}
\newcommand{\mD}{\mathscr{D}}
\newcommand{\mE}{\mathscr{E}}

\newcommand{\mG}{\mathscr{G}}
\newcommand{\mI}{\mathscr{I}}

\newcommand{\mM}{\mathscr{M}}

\newcommand{\mO}{\mathscr{O}}

\newcommand{\mR}{\mathscr{R}}
\newcommand{\mU}{\mathscr{U}}
\newcommand{\mS}{\mathscr{S}}
\newcommand{\mV}{\mathscr{V}}
\newcommand{\mT}{\mathscr{T}}
\newcommand{\mZ}{\mathscr{Z}}

\DeclareMathOperator{\bcirc}{bcirc}
\DeclareMathOperator{\unfold}{unfold}
\DeclareMathOperator{\fold}{fold}
\DeclareMathOperator{\diag}{diag}
\DeclareMathOperator{\spec}{spec}
\DeclareMathOperator{\rank}{rank}
\DeclareMathOperator{\prank}{prank}
\DeclareMathOperator{\Tprank}{Tprank}

\DeclareMathOperator*{\argmin}{argmin}

\begin{document}

\title{Low T-Phase Rank Approximation of Third Order Tensors}

\author[1]{\fnm{Taehyeong} \sur{Kim}}
\author[1,2]{\fnm{Hayoung} \sur{Choi}}
\author*[3]{\fnm{Yimin} \sur{Wei}}\email{ymwei@fudan.edu.cn}

\affil*[1]{\orgdiv{Nonlinear Dynamics and Mathematical Application Center}, \orgname{Kyungpook National University}, \orgaddress{ \city{Daegu}, \postcode{41566}, \country{Republic of Korea}}}

\affil[2]{\orgdiv{Department of Mathematics}, \orgname{Kyungpook National University}, \orgaddress{ \city{Daegu}, \postcode{41566}, \country{Republic of Korea}}}

\affil[3]{\orgdiv{School of Mathematical Sciences}, \orgname{Fudan University}, \orgaddress{\street{220, Handan Road}, \city{Shanghai}, \postcode{200433}, \state{Shanghai}, \country{China}}}

\abstract{
We study low T-phase-rank approximation of sectorial third-order tensors $\mA\in\C^{n\times n\times p}$ under the tensor T-product.
We introduce canonical T-phases and T-phase rank, and formulate the approximation task as minimizing a symmetric gauge of the canonical phase vector under a T-phase-rank constraint.
Our main tool is a tensor phase-majorization inequality for the geometric mean, obtained by lifting the matrix inequality through the block-circulant representation.
In the positive-imaginary regime, this yields an exact optimal-value formula and an explicit optimal half-phase truncation family.
We further establish tensor counterparts of classical matrix phase inequalities and derive a tensor small phase theorem for MIMO linear time-invariant systems.
}

\keywords{Low phase rank approximation; T-product; Tensor phase; Geometric mean}

\maketitle

\section{Introduction}

Third-order tensors equipped with the T-product form a matrix-like algebra: using the block-circulant embedding together with the discrete Fourier transform (DFT), the T-product reduces to ordinary matrix multiplication in the Fourier domain~\citep{kilmer2008third,kilmer2011factorization,kilmer2013third}.
This perspective has enabled many tensor counterparts of familiar matrix constructions, including T-SVD and related fast or randomized algorithms~\citep{Wei2024singular,ding2023randomized,che2022fast,miao2020generalized}, spectral notions and perturbation analysis based on T-eigenvalues~\citep{miao2021t,mo2024perturbation}, and factorization tools such as tensor LU/QR and CUR-type decompositions~\citep{zhu2022tensor,chen2022tensor,chen2024coseparable}.
Operator means have also been developed for T-positive definite tensors, in particular, a T-product geometric mean~\citep{ju2024geometric}.
Low-rank tensor approximation remains a basic ingredient for compression and denoising and appears broadly in multiway data analysis~\citep{song2019relative,friedland2015low,nie2017low,deng2023solving,rozada2024tensor,jiang2020nonnegative,lebeau2024random}.

For sectorial matrices, recent work has emphasized a complementary phase-oriented viewpoint.
One associates a canonical phase vector and defines the phase rank as the number of nonzero canonical phases; phase majorization inequalities, especially those involving the matrix geometric mean, then yield stability statements such as small phase theorems~\citep{zhao2022low}.
These ideas have been further exploited in control for MIMO LTI systems, where phase constraints complement gain-based criteria and lead to robust stability tests and synthesis guidelines~\citep{chen2019phase,chen2024phase2,Mao2022,SRAZHIDINOV2023,wang2020phases,WANG2023on,Chen2024phase1,Chen2023,WANG2024,Wang2024first}.

The goal of this paper is to develop an analogous phase framework for third-order tensors under the T-product.
We adopt the notion of sectorial tensors introduced in~\citep{ding2025tensor}: a tensor is sectorial when its block-circulant embedding is a sectorial matrix.
For a sectorial tensor, we define the canonical T-phase vector and the associated T-phase rank.
We then formulate a low T-phase-rank approximation problem: among tensors with prescribed T-phase rank, we seek an approximant that minimizes a symmetric gauge of the canonical phase vector.
Our main technical ingredient is a phase-majorization inequality for the tensor geometric mean, obtained by lifting a known matrix inequality to the block-circulant setting~\citep{Chen2024phase1,Chen2023,chen2019phase,chen2024phase2}.
Finally, we apply the resulting phase calculus to tensor-form MIMO LTI systems and derive a tensor version of the small phase theorem.

Relative to the matrix results of~\citep{zhao2022low}, our contributions and scope are:
\begin{itemize}
\item[(i)] We establish a tensor phase-majorization inequality for the T-product geometric mean via block-circulant lifting.
\item[(ii)] We construct a globally sorted half-phase truncation family and derive the exact optimal objective value for low-$\Tprank$ approximation in the positive-imaginary setting.
\item[(iii)] We prove tensor analogues of several phase inequalities and connect the phase framework to tensor MIMO stability analysis.
\item[(iv)] The exactness proof is obtained by reducing the tensor problem to the matrix formulation in~\citep[Theorem~2]{zhao2022low} via the block-circulant representation.
\end{itemize}

While the T-product literature has largely focused on magnitude-driven quantities such as T-singular values and tensor ranks~\citep{kilmer2011factorization,kilmer2013third,Wei2024singular}, phase information - central in the analysis of sectorial matrices - has not been systematically developed for tensors.
The canonical T-phases and T-phase rank introduced here provide a phase-sensitive approximation theory that complements existing magnitude-based approaches.

The paper is structured as follows.
Section~2 collects notation for the T-product framework and recalls the basic notions needed for
sectorial tensors and their canonical phases.
Section~3 defines T-phase gauge and establishes the phase inequalities that will be used later,
with an emphasis on a majorization bound for the tensor geometric mean.
Section~4 formulates a low T-phase-rank approximation problem and derives a constructive
half-phase truncation procedure with an exact objective formula (in the positive-imaginary regime).
Section~5 applies the resulting phase calculus to tensor-form MIMO LTI systems and derives a tensor
small phase theorem with stability consequences.

\section{Preliminaries}\label{sec:preliminaries}

For $z\in\C\setminus\{0\}$ we use the principal polar representation
$z=|z|e^{j\angle z}$ with $\angle z\in(-\pi,\pi]$ and $j=\sqrt{-1}$.
We use weak/strong majorization notation as in \citep{marshall1979inequalities}.
For a third-order tensor $\mA=[a_{ijk}]\in\C^{m\times n\times p}$, the $k$th frontal slice is denoted by
$\bm A^{(k)}\in\C^{m\times n}$ for $k\in[p]:=\{1,2,\ldots,p\}$.
The Frobenius inner product is $\langle \mA,\mB\rangle_F:=\sum_{i,j,k}\overline{a}_{ijk}b_{ijk}$ and
$\|\mA\|_F:=\sqrt{\langle \mA,\mA\rangle_F}$.

We adopt the T-product framework of \citep{kilmer2008third,kilmer2011factorization}.
Let $\bcirc(\mA)\in\C^{mp\times np}$ be the block-circulant embedding of $\mA$ and
$\unfold(\mA)\in\C^{mp\times n}$ be the unfolding obtained by stacking the frontal slices; let $\fold$ be its inverse.
For conforming tensors $\mA\in\C^{m\times n\times p}$ and $\mB\in\C^{n\times s\times p}$, the T-product is
\[
\mA*\mB := \fold\big(\bcirc(\mA)\unfold(\mB)\big).
\]
The identity tensor $\mI_{np}\in\R^{n\times n\times p}$ satisfies $\bcirc(\mI_{np})=\bm I_{np}$.
For $\mA\in\C^{m\times n\times p}$, the conjugate transpose $\mA^H$ is defined by $\bcirc(\mA^H)=\bcirc(\mA)^H$.
A frontal-square tensor $\mA\in\C^{n\times n\times p}$ is \emph{T-Hermitian} if $\mA=\mA^H$; we denote the set by $\mathbb{H}^{n\times n\times p}$.
A tensor $\mA\in\mathbb{H}^{n\times n\times p}$ is \emph{T-positive (semi-)definite} if $\langle \mX,\mA*\mX\rangle_F>0$ ($\ge 0$) for all nonzero $\mX\in\C^{n\times 1\times p}$; we denote the corresponding cones by $\mathbb{H}^{n\times n\times p}_{++}$ and $\mathbb{H}^{n\times n\times p}_{+}$ \citep{zheng2021t}.

A frontal-square tensor $\mA\in\C^{n\times n\times p}$ is said to be \emph{T-invertible} if there exists a tensor $\mA^{-1}\in\C^{n\times n\times p}$ such that $\mA*\mA^{-1}=\mA^{-1}*\mA=\mI_{np}$.
In this case, we call $\mA^{-1}$ the \emph{T-inverse} of $\mA$.
Equivalently, the block-circulant embedding $\bcirc(\mA)$ is nonsingular and
\begin{equation}\label{eq:Tinverse}
\bcirc(\mA^{-1})=\bcirc(\mA)^{-1}.
\end{equation}

Following \citep{zheng2021t}, we define the \emph{T-eigenvalues} of $\mA$ as the eigenvalues of $\bcirc(\mA)$.
That is, a scalar $\lambda\in\C$ is a T-eigenvalue of $\mA\in\C^{n\times n\times p}$ if $\lambda$ is an eigenvalue of $\bcirc(\mA)$, and we denote the multiset of all T-eigenvalues by $\spec(\mA)$.

\begin{lemma}[Fourier block diagonalization; see \citep{kilmer2011factorization}]\label{LemmaFourierBlockDiag}
Let $\mA\in\C^{m\times n\times p}$ and let $\bm F_p$ be the $p\times p$ unitary DFT matrix.
Then
\begin{equation}\label{eqFourierBlockDiagMN}
\bcirc(\mA) = (\bm F_p^H\otimes \bm I_m)\,\diag(\bm A_1,\ldots,\bm A_p)\,(\bm F_p\otimes \bm I_n),
\end{equation}
where $\bm A_1,\ldots,\bm A_p$ are the frontal slices of the FFT of $\mA$ along the third mode.
\end{lemma}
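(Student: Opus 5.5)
The plan is to identify $\bcirc(\mA)$ as a block-circulant matrix whose block structure is governed by the cyclic shift $\bm C_p$, and then diagonalize that shift with the DFT. Write $\bm A^{(1)},\ldots,\bm A^{(p)}$ for the frontal slices. Let $\bm C_p\in\R^{p\times p}$ be the cyclic downshift permutation, so that $\bm C_p\bm e_i=\bm e_{i+1}$ with indices taken modulo $p$. Then the block $(i,l)$ of $\bcirc(\mA)$ is $\bm A^{(((i-l)\bmod p)+1)}$, and therefore
\[
\bcirc(\mA)=\sum_{k=1}^{p} \bm C_p^{\,k-1}\otimes \bm A^{(k)} .
\]
The first step is to verify this identity entrywise from the definition of $\bcirc$. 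This is routine bookkeeping: the $(i,l)$ entry of $\bm C_p^{k-1}$ equals $1$ exactly when $i-l\equiv k-1 \pmod p$.

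Next I will use the fact that the unitary DFT matrix $\bm F_p$, with entries $(\bm F_p)_{st}=\omega^{(s-1)(t-1)}/\sqrt p$ and $\omega=e^{-2\pi j/p}$, diagonalizes $\bm C_p$. Since $\bm C_p$ is circulant, a direct check on the columns of $\bm F_p^H$, which are eigenvectors of $\bm C_p$, gives
\[
\bm C_p=\bm F_p^H\,\bm\Omega\,\bm F_p,\qquad \bm\Omega=\diag(1,\omega,\ldots,\omega^{p-1}).
\]
The sign convention for $\omega$ must be fixed to match this orientation of the shift. Powers then follow immediately: $\bm C_p^{k-1}=\bm F_p^H\bm\Omega^{k-1}\bm F_p$.

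Substituting this into the sum and applying the mixed-product rule for Kronecker products, $(\bm X\otimes \bm Y)(\bm Z\otimes \bm W)=\bm X\bm Z\otimes \bm Y\bm W$, I write
\[
\bm C_p^{k-1}\otimes\bm A^{(k)}=(\bm F_p^H\otimes\bm I_m)\,(\bm\Omega^{k-1}\otimes \bm A^{(k)})\,(\bm F_p\otimes\bm I_n).
\]
The outer factors do not depend on $k$, so summing over $k$ leaves the middle factor $\sum_k \bm\Omega^{k-1}\otimes\bm A^{(k)}$. This matrix is block diagonal, and its $s$-th diagonal block is
\[
\bm A_s=\sum_{k=1}^{p}\omega^{(s-1)(k-1)}\bm A^{(k)} .
\]
That is exactly the $s$-th frontal slice of the (unnormalized) DFT of $\mA$ along the third mode, which establishes \eqref{eqFourierBlockDiagMN}.

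I expect the main obstacle to be consistency of conventions rather than any real difficulty. Three choices must be made compatibly: downshift versus upshift for $\bm C_p$, the sign of the exponent in $\omega$, and the normalization of the FFT (unnormalized transform, together with the unitary $\bm F_p$ in the outer factors). I will fix the convention so that $\bm A_s$ coincides with the output of \texttt{fft} along mode three. If the opposite orientation is used, the diagonal blocks are simply permuted or conjugate-indexed, and the statement still holds after relabeling the $\bm A_s$.
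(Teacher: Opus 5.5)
Your proof is correct. The paper gives no argument of its own for this lemma and simply cites Kilmer--Martin. Your derivation via $\bcirc(\mA)=\sum_{k=1}^{p}\bm C_p^{k-1}\otimes\bm A^{(k)}$, the diagonalization $\bm C_p=\bm F_p^H\bm\Omega\bm F_p$, and the mixed-product rule is the standard argument behind that citation.

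Your conventions are already consistent. Your block pattern is the Kilmer--Martin $\bcirc$, and with $\omega=e^{-2\pi j/p}$ one checks entrywise that $(\bm F_p\bm C_p)_{st}=\omega^{(s-1)t}/\sqrt p=(\bm\Omega\bm F_p)_{st}$. So the diagonal blocks are the unnormalized FFT slices in their natural order, and the closing hedge about relabeling is unnecessary.
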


Following \citep{ding2025tensor}, the numerical range of $\mA\in\C^{n\times n\times p}$ is
\begin{equation}\label{def:numerical_range_tensor}
W(\mA):=\big\{\langle \mX,\mA*\mX\rangle_F \ | \ \mX\in\C^{n\times 1\times p},\ \|\mX\|_F=1\big\}.
\end{equation}
Moreover, $W(\mA)=W(\bcirc(\mA))$ and hence $W(\mA)$ is convex \citep{ding2025tensor}.
We call $\mA$ \emph{sectorial} if $0\notin W(\mA)$.

\begin{theorem}[Sectorial tensor decomposition; \citep{ding2025tensor}]\label{thm:sectorial_decomposition}
Let $\mA\in\C^{n\times n\times p}$ be sectorial. Then there exist a nonsingular tensor $\mT$ and a diagonal unitary tensor $\mD$ such that
\begin{equation}\label{eq:sectorial_decomposition}
\mA=\mT^H*\mD*\mT.
\end{equation}
\end{theorem}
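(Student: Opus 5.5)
The plan is to transport the matrix sectorial decomposition through the Fourier block diagonalization of Lemma~\ref{LemmaFourierBlockDiag}. We will not decompose $\bcirc(\mA)$ directly, since its congruence factors need not be block circulant. Instead we decompose each Fourier slice separately and fold back.

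\emph{Sectoriality of the slices.} Write $\bcirc(\mA)=(\bm F_p^H\otimes \bm I_n)\diag(\bm A_1,\ldots,\bm A_p)(\bm F_p\otimes \bm I_n)$. Since $\bm F_p\otimes\bm I_n$ is unitary,
\[
W(\bcirc(\mA))=W(\diag(\bm A_1,\ldots,\bm A_p))=\operatorname{conv}\bigcup_k W(\bm A_k).
\]
Because $0\notin W(\mA)=W(\bcirc(\mA))$, we get $0\notin W(\bm A_k)$ for every $k$. Hence each $\bm A_k$ is a sectorial matrix. Moreover, all the $W(\bm A_k)$ lie in one common open half-plane through the origin, since their convex hull avoids $0$.

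\emph{Slicewise decomposition.} By the classical matrix sectorial decomposition, there exist nonsingular $\bm T_k$ and diagonal unitary $\bm D_k$ with $\bm A_k=\bm T_k^H\bm D_k\bm T_k$. Set
\[
\bm T=(\bm F_p^H\otimes \bm I_n)\diag(\bm T_k)(\bm F_p\otimes \bm I_n),\qquad
\bm D=(\bm F_p^H\otimes \bm I_n)\diag(\bm D_k)(\bm F_p\otimes \bm I_n).
\]

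\emph{Folding back to tensors.} Matrices of the form $(\bm F_p^H\otimes \bm I_n)\diag(\cdot)(\bm F_p\otimes \bm I_n)$ are exactly the block circulants. So $\bm T=\bcirc(\mT)$ and $\bm D=\bcirc(\mD)$, where $\mT$ and $\mD$ are obtained from $\{\bm T_k\}$ and $\{\bm D_k\}$ by the inverse FFT along the third mode. Then
\[
\bcirc(\mT^H*\mD*\mT)=\bm T^H\bm D\bm T=\bcirc(\mA),
\]
and injectivity of $\bcirc$ gives the identity \eqref{eq:sectorial_decomposition}. Nonsingularity of $\mT$ follows from \eqref{eq:Tinverse}, because $\bcirc(\mT)$ is invertible whenever every $\bm T_k$ is.

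\emph{Properties of $\mD$.} $\mD$ is T-unitary because $\bcirc(\mD)$ is unitary. Each frontal slice of $\mD$ is a linear combination of the diagonal matrices $\bm D_k$, so every frontal slice of $\mD$ is diagonal; this is the sense of ``diagonal'' for $\mD$.

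\emph{Main obstacle.} The hardest part is making the Fourier-domain argument rigorous. One must check that $W(\bcirc(\mA))$ equals the convex hull of the slice numerical ranges, so that sectoriality passes to each slice. One must also verify that the inverse FFT of a family of diagonal unitaries is a diagonal unitary tensor in the paper's sense. If $\mA$ is real, there is a further point: to obtain real $\mT$ and $\mD$, the slice decompositions must be chosen so that conjugate-symmetric slices $\bm A_k$ and $\bm A_{p+2-k}$ get conjugate factors; the statement does not demand real factors, however.
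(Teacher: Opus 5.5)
Your proof is correct. Sectoriality passes to each Fourier slice because $W(\bm A_k)\subseteq W(\bcirc(\mA))$; the matrix sectorial decomposition is then applied slice by slice, and the block-diagonal factors fold back to a nonsingular $\mT$ and an f-diagonal T-unitary $\mD$. The paper does not reprove this cited result, but where it uses it (the corollary on the majorization-maximal element and the setup of Section~4) it builds $\bcirc(\mT)$ and $\bcirc(\mD)$ from slice-wise matrix sectorial decompositions exactly as you do, so your route is essentially the same.
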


\begin{definition}[Canonical T-phases]\label{def:canonical_phases}
Let $\mA\in\C^{n\times n\times p}$ be sectorial.
Then $\bcirc(\mA)$ is a sectorial matrix and admits a sectorial decomposition
$\bcirc(\mA)=\bm T^H \bm D \bm T$,
where $\bm T$ is nonsingular and $\bm D$ is diagonal unitary.
Write the diagonal entries of $\bm D$ as $e^{j\phi_k(\mA)}$ with $\phi_k(\mA)\in(-\pi,\pi]$.
We order them so that
\[
\phi_1(\mA)\ge\phi_2(\mA)\ge\cdots\ge\phi_{np}(\mA),
\qquad \phi_1(\mA)-\phi_{np}(\mA)<\pi.
\]
The vector $\phi(\mA):=(\phi_1(\mA),\ldots,\phi_{np}(\mA))$ is called the \emph{canonical T-phase vector}.
We also set $\overline{\phi}(\mA):=\phi_1(\mA)$ and $\underline{\phi}(\mA):=\phi_{np}(\mA)$.
\end{definition}

Fix $\alpha,\beta\in\R$ with $0<\beta-\alpha\le\pi$ and $\frac{\alpha+\beta}{2}\in(-\pi,\pi]$.
A sectorial tensor $\mA\in\C^{n\times n\times p}$ is said to lie in the sector $[\alpha,\beta]$ if
$[\underline{\phi}(\mA),\overline{\phi}(\mA)]\subseteq[\alpha,\beta]$; we denote this by $\mA\in\mathcal{C}[\alpha,\beta]$.
We also write $\mathcal{C}(\alpha,\beta)$ for the corresponding open sector.
We call $\mA$ \emph{quasi-sectorial} if $\mA\in\mathcal{C}[\alpha,\beta]$ for some $\beta-\alpha<\pi$, and
\emph{semi-sectorial} if $\mA\in\mathcal{C}[\alpha,\beta]$ for some $\beta-\alpha\le\pi$.
In particular, $\mA$ is \emph{positive-real} (accretive) when $\mA\in\mathcal{C}(-\frac{\pi}{2},\frac{\pi}{2})$, and \emph{negative-imaginary} when $\mA\in\mathcal{C}(-\pi,0]$.

\section{T-phase gauges and T-phase inequalities}
Let $\mA \in \C^{n\times n\times p}$ be sectorial. We denote its T-eigenvalues and ordered T-singular values by $\lambda(\mA)=[\lambda_k(\mA)]_{k=1}^{np}$ and $\sigma(\mA)=[\sigma_{k}(\mA)]_{k=1}^{np}$, respectively, where $\sigma_{1}(\mA)\ge\cdots\ge\sigma_{np}(\mA)$.
We write $\phi(\mA)=[\phi_k(\mA)]_{k=1}^{np}$ for the canonical T-phase vector and, for later convenience, set $\phi_{np+1}(\mA)=0$.

The tensor functional calculus in \citep{ju2024geometric} defines $\mA^{\alpha}$ (real $\alpha$) for T-Hermitian positive definite tensors.
Here, we need the principal power for a broader class, namely, strictly accretive tensors.
We therefore consider
\begin{equation*}
    \mathbb{T}^{n\times n\times p}_{+}
    = \left\{ \mA \in \mathbb{C}^{n\times n\times p} \ \middle|\ \Re\langle \mX,\mA*\mX\rangle_F > 0 \ \text{for all } \mX\neq 0 \right\}
    = \mathcal{C}\!\left(-\frac{\pi}{2},\frac{\pi}{2}\right).
\end{equation*}
In particular, $\mathbb{T}_{+}^{n\times n\times p}$ implies $\Re(\lambda_i(\mA))>0$ for all T-eigenvalues.
Whenever principal powers or the Drury-type geometric mean are used, we assume tensors lie in $\mathbb{T}_{+}^{n\times n\times p}$.

For $\mA \in \mathbb{T}^{n\times n\times p}_{+}$, by Lemma~\ref{LemmaFourierBlockDiag}, we define $\mA^{\alpha}$ as
\begin{align*}
    \bcirc(\mA^{\alpha}) 
    =& (\bm{F}_p^H \otimes \bm{I}_n)\cdot \diag(\bm{A}_1,\bm{A}_2,\ldots,\bm{A}_p)^{\alpha}\cdot (\bm{F}_p \otimes \bm{I}_n)\\
    =& (\bm{F}_p^H \otimes \bm{I}_n)\cdot \diag(\bm{A}_1^{\alpha},\bm{A}_2^{\alpha}, \ldots,\bm{A}_p^{\alpha})\cdot (\bm{F}_p \otimes \bm{I}_n),
\end{align*}
where each $\bm{A}_{i}^{\alpha}$ is calculated using the Dunford--Taylor integral~\citep{drury2015principal}:
\begin{equation}\label{eq:matrix_t_power}
    \bm{A}_{i}^{\alpha} = \frac{1}{2\pi i} \oint_{\Gamma} (z\bm{I}_{n} - \bm{A}_{i})^{-1} z^{\alpha} \, dz.
\end{equation}
Here, $\Gamma$ is a closed contour that winds around each eigenvalue of $\bm{A}_{i}$ exactly once and avoids the branch cut $(-\infty, 0]$.
The principal power is defined by
\[
z^\alpha := r^\alpha e^{i\alpha\theta}
\quad\text{for }z=re^{i\theta}\text{ with }r>0,\;-\pi<\theta<\pi,
\]
where $\theta=\mathrm{Arg}(z)$ denotes the principal argument.
Moreover, since $\mA^{\alpha}$ is well defined, we have $\mA^{-\alpha}=(\mA^{\alpha})^{-1}$ when $\mA^{\alpha}$ is nonsingular.

Following Drury's extension of the matrix geometric mean to accretive matrices~\citep{drury2015principal}, we define a geometric mean on $\mathbb{T}^{n\times n\times p}_{+}$ by an integral formula.
Specifically, for $\mA,\mB \in \mathbb{T}^{n\times n\times p}_{+}$ we define $\mX=\mA \# \mB$ through
\begin{equation}\label{eq:extend_geometric_mean}
    (\mA \# \mB)^{-1} = \frac{2}{\pi} \int_{0}^{\infty} \left(t\mA + t^{-1}\mB \right)^{-1} \frac{dt}{t}.
\end{equation}
This definition is symmetric in $\mA$ and $\mB$.

\begin{proposition}\label{PropBcircGeometricMean}
For $\mA,\mB\in\mathbb{T}^{n\times n\times p}_{+}$, the block-circulant embedding commutes with the geometric mean:
\[
\bcirc(\mA\#\mB)=\bcirc(\mA)\#\bcirc(\mB).
\]
\end{proposition}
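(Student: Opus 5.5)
The plan is to push $\bcirc$ through the integral formula \eqref{eq:extend_geometric_mean}, using that $\bcirc$ is linear, multiplicative, and compatible with inverses, and then compare with Drury's matrix integral formula for $\bcirc(\mA)\#\bcirc(\mB)$.

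\emph{Step 1: basic properties of $\bcirc$.}
First, $\bcirc$ is a linear map from $\C^{n\times n\times p}$ into the block-circulant matrices, and it is injective (the first block column recovers $\unfold(\mA)$). It is also multiplicative, $\bcirc(\mA*\mB)=\bcirc(\mA)\bcirc(\mB)$, which follows from Lemma~\ref{LemmaFourierBlockDiag} since block-diagonal matrices multiply slice-wise. Finally, it respects inverses by \eqref{eq:Tinverse}.

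\emph{Step 2: accretivity and invertibility of the integrand.}
Since $W(\mA)=W(\bcirc(\mA))$, the condition $\mA\in\mathbb{T}^{n\times n\times p}_{+}$ says exactly that $\bcirc(\mA)$ is strictly accretive, i.e. $\Re\, \bm x^H\bcirc(\mA)\bm x>0$ for $\bm x\neq 0$. So $\bcirc(\mA)$ and $\bcirc(\mB)$ lie in the domain of Drury's matrix geometric mean. For $t>0$, the tensor $t\mA+t^{-1}\mB$ is strictly accretive, hence T-invertible, and
\[
\bcirc\big((t\mA+t^{-1}\mB)^{-1}\big)=\big(t\,\bcirc(\mA)+t^{-1}\bcirc(\mB)\big)^{-1}.
\]

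\emph{Step 3: passing $\bcirc$ through the integral.}
Because $\bcirc$ is a linear map between finite-dimensional spaces, it commutes with the (entrywise) integral, provided the integral converges absolutely. Convergence follows from a standard bound: for a strictly accretive $\bm M$ with $\Re\,\bm x^H\bm M\bm x\ge c\|\bm x\|^2$, we have $\|\bm M^{-1}\|\le 1/c$. With $c=t\,c_A+t^{-1}c_B$, this gives an integrand of norm $O\big(1/(t^2c_A+c_B)\big)$ near $\infty$, integrable against $dt$, and the symmetric estimate near $0$. Hence
\[
\bcirc\big((\mA\#\mB)^{-1}\big)=\frac{2}{\pi}\int_0^\infty \big(t\,\bcirc(\mA)+t^{-1}\bcirc(\mB)\big)^{-1}\frac{dt}{t}.
\]
By Drury's formula~\citep{drury2015principal}, the right-hand side equals $(\bcirc(\mA)\#\bcirc(\mB))^{-1}$.

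\emph{Step 4: the main obstacle, well-definedness of $\mA\#\mB$.}
The delicate point is that $\mA\#\mB$ must be a genuine tensor, i.e. the integral $\mathscr{S}$ defined by the right-hand side of \eqref{eq:extend_geometric_mean} must be T-invertible. By Step 3, $\bcirc(\mathscr{S})$ equals $(\bcirc(\mA)\#\bcirc(\mB))^{-1}$, which is invertible by Drury's theory. Therefore $\mathscr{S}$ is T-invertible, and by \eqref{eq:Tinverse},
\[
\bcirc(\mA\#\mB)=\bcirc(\mathscr{S}^{-1})=\bcirc(\mathscr{S})^{-1}=\bcirc(\mA)\#\bcirc(\mB).
\]
As an optional consistency check, conjugating by $\bm F_p\otimes \bm I_n$ shows the result is block-diagonal with blocks $\bm A_i\#\bm B_i$, matching the Fourier-domain definition of powers.
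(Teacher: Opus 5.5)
Your proposal is correct and follows the paper's own argument: apply $\bcirc$ to the integral formula \eqref{eq:extend_geometric_mean}, use linearity of $\bcirc$ together with \eqref{eq:Tinverse} on the integrand, recognize Drury's integral representation of $(\bcirc(\mA)\#\bcirc(\mB))^{-1}$, and take inverses. Your extra checks supply details the paper leaves implicit: invertibility of $t\mA+t^{-1}\mB$ via strict accretivity, absolute convergence of the integral, and T-invertibility of the integral itself, so that $\mA\#\mB$ is well defined.
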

\begin{proof}
Apply $\bcirc(\cdot)$ to \eqref{eq:extend_geometric_mean}.
By the linearity of $\bcirc$ and \eqref{eq:Tinverse},
\[
\bcirc\big((\mA\#\mB)^{-1}\big)
=\frac{2}{\pi}\int_{0}^{\infty}\big(t\,\bcirc(\mA)+t^{-1}\,\bcirc(\mB)\big)^{-1}\frac{dt}{t}.
\]
The right-hand side is exactly the integral representation of $(\bcirc(\mA)\#\bcirc(\mB))^{-1}$. Hence,
$\bcirc\big((\mA\#\mB)^{-1}\big)=(\bcirc(\mA)\#\bcirc(\mB))^{-1}$.
Taking inverses gives the claim.
\end{proof}

Moreover, each $\mA \in \mathbb{T}_{+}^{n\times n\times p}$ admits a unique principal square root $\mA^{\frac{1}{2}}\in\mathbb{T}_{+}^{n\times n\times p}$ satisfying $\mA = \mA^{\frac{1}{2}} * \mA^{\frac{1}{2}}$; see \citep[Theorem 1.29]{higham2008functions}.
With this notation, the tensor geometric mean admits the standard closed-form expression:
\begin{theorem}
Let $\mA,\mB \in \mathbb{T}^{n\times n\times p}_{+}$, then
$$
\mX = \mA^{\frac{1}{2}} * \left(\mA^{-\frac{1}{2}} * \mB * \mA^{-\frac{1}{2}}\right)^{\frac{1}{2}} * \mA^{\frac{1}{2}}
= \mB^{\frac{1}{2}} * \left( \mB^{-\frac{1}{2}} * \mA * \mB^{-\frac{1}{2}} \right)^{\frac{1}{2}} * \mB^{\frac{1}{2}}.
$$
\end{theorem}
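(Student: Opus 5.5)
The plan is to transfer the statement to the block-circulant side, where it reduces to Drury's closed-form expression for the geometric mean of accretive matrices, and then pull it back. Write $A:=\bcirc(\mA)$ and $B:=\bcirc(\mB)$. Since $W(\mA)=W(A)$, both $A$ and $B$ are strictly accretive matrices. By Proposition~\ref{PropBcircGeometricMean}, $\bcirc(\mX)=A\#B$. Drury~\citep{drury2015principal} shows that for accretive matrices the integral definition \eqref{eq:extend_geometric_mean} agrees with
\[
A\#B=A^{\frac12}\big(A^{-\frac12}BA^{-\frac12}\big)^{\frac12}A^{\frac12}.
\]
He also shows that $A^{-\frac12}BA^{-\frac12}$ is accretive, so its principal square root exists. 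Because the integral definition is symmetric in $A$ and $B$, the same identity holds with the roles of $A$ and $B$ swapped. It then suffices to show that $\bcirc$ commutes with every operation appearing in the formula: T-products, T-inverses, and principal square roots.

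\textbf{Products and inverses.} The identity $\bcirc(\mA*\mB)=\bcirc(\mA)\bcirc(\mB)$ follows from the definition of the T-product, because block-circulant matrices form an algebra. The inverse case is exactly \eqref{eq:Tinverse}.

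\textbf{Powers.} The key step is $\bcirc(\mA^{\alpha})=\bcirc(\mA)^{\alpha}$, where the right-hand side is the matrix principal power. Starting from the definition of $\mA^{\alpha}$ and Lemma~\ref{LemmaFourierBlockDiag}, I would use two facts.
\begin{itemize}
\item The Dunford--Taylor integral \eqref{eq:matrix_t_power} of a block-diagonal matrix is the block-diagonal matrix of the blockwise integrals. A single contour encircling all eigenvalues of every $\bm A_i$ works, since all of them lie in the open right half-plane, away from the branch cut.
\item The integral commutes with unitary similarity: $(zI-U^HDU)^{-1}=U^H(zI-D)^{-1}U$.
\end{itemize}
Together these give
\[
\bcirc(\mA^{\alpha})=\big((\bm F_p^H\otimes\bm I_n)\diag(\bm A_i)(\bm F_p\otimes\bm I_n)\big)^{\alpha}=A^{\alpha}.
\]

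\textbf{Identifying $\mA^{\frac12}$.} The tensor $\mA^{\frac12}$ defined this way squares to $\mA$ and lies in $\mathbb{T}_+^{n\times n\times p}$. Its T-eigenvalues are the principal square roots of the T-eigenvalues of $\mA$. It is accretive because it is the matrix principal square root of the accretive matrix $A$, which is accretive by \citep{drury2015principal}. So it coincides with the unique principal square root quoted from \citep{higham2008functions}.

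\textbf{Well-definedness of the inner root.} The middle factor is $\mC:=\mA^{-\frac12}*\mB*\mA^{-\frac12}$. To take $\mC^{\frac12}$ we need $\mC\in\mathbb{T}_+^{n\times n\times p}$. This follows because $\bcirc(\mC)=A^{-\frac12}BA^{-\frac12}$ is accretive by Drury's lemma, and $W(\mC)=W(\bcirc(\mC))$.

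\textbf{Conclusion.} Applying $\bcirc$ to the right-hand side of the claimed formula gives $A^{\frac12}(A^{-\frac12}BA^{-\frac12})^{\frac12}A^{\frac12}=A\#B=\bcirc(\mX)$. Since $\bcirc$ is injective, the first equality follows. The second equality follows by the same argument with $\mA$ and $\mB$ swapped, using the symmetry of \eqref{eq:extend_geometric_mean}.

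I expect the main obstacle to be the accretivity of $A^{-\frac12}BA^{-\frac12}$, together with Drury's equivalence between the integral definition and the closed form. If I do not cite Drury directly, I would prove the equivalence by checking that $X=A^{\frac12}(A^{-\frac12}BA^{-\frac12})^{\frac12}A^{\frac12}$ satisfies the integral identity. Setting $C=A^{-\frac12}BA^{-\frac12}$, one has $(tA+t^{-1}B)^{-1}=A^{-\frac12}(tI+t^{-1}C)^{-1}A^{-\frac12}$. This reduces the problem to the scalar identity $\frac{2}{\pi}\int_0^\infty (t+t^{-1}c)^{-1}\frac{dt}{t}=c^{-\frac12}$, valid for $\Re c>0$, which I would apply through the holomorphic functional calculus.
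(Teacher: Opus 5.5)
\textbf{Gap: the accretivity claim for $A^{-1/2}BA^{-1/2}$ is false.} This is the step you flagged as the main obstacle. In general $A^{-1/2}BA^{-1/2}$ is \emph{not} accretive, so $\mC=\mA^{-1/2}*\mB*\mA^{-1/2}$ need not lie in $\mathbb{T}_+^{n\times n\times p}$, and Drury cannot have proved otherwise. A counterexample exists already for $n=p=1$. Take $\mA=e^{-j\pi/3}$ and $\mB=e^{j\pi/3}$, both with real part $1/2$. Then $\mC=\mB/\mA=e^{2j\pi/3}$, which has real part $-1/2$. More generally, the eigenvalues of $A^{-1/2}BA^{-1/2}$ are those of $A^{-1}B$, and their arguments can come arbitrarily close to $\pm\pi$. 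This breaks three parts of your proposal:
\begin{itemize}
\item your justification that $\mC^{1/2}$ exists;
\item your power-commutation step, since for $\mC$ the contour cannot be placed in the open right half-plane;
\item your fallback scalar identity, which you state only for $\Re c>0$ and so does not cover the spectrum of $C$ when applied through the functional calculus.
\end{itemize}

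\textbf{The fix.} What the argument actually needs, and what is true, is only that the spectrum of $C$ avoids $(-\infty,0]$. If $B\eta=\lambda A\eta$ with $\eta\neq 0$, then $\lambda=\eta^*B\eta/\eta^*A\eta$ is a quotient of two numbers in the open right half-plane, so $|\arg\lambda|<\pi$. This is exactly how the paper's proof begins. With that in place, the rest goes through:
\begin{itemize}
\item $\mC^{1/2}$ is defined by the Dunford--Taylor integral over a contour in $\C\setminus(-\infty,0]$. The paper defines $\mA^{\alpha}$ only on $\mathbb{T}_+^{n\times n\times p}$, so this extended reading is needed in either proof.
\item Your $\bcirc$-commutation argument works unchanged with such a contour.
\item The identity $\frac{2}{\pi}\int_0^\infty (t^2+c)^{-1}\,dt=c^{-1/2}$ holds on the whole slit plane, by analytic continuation from $c>0$. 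This yields $A^{1/2}X^{-1}A^{1/2}=C^{-1/2}$.
\end{itemize}
That last computation is precisely the paper's, which the paper carries out directly on tensors rather than after lifting. Once repaired, your lift-and-pull-back route is a legitimate alternative. Its only extra cost is the explicit commutation of $\bcirc$ with principal powers, which the paper uses tacitly.
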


\begin{proof}
Let $\lambda$ be an eigenvalue of $\mA^{-\frac{1}{2}} * \mB * \mA^{-\frac{1}{2}}$, then there exists a vector $\bm{v}\neq 0$ such that
$$
\bcirc \left(\mA^{-\frac{1}{2}} * \mB * \mA^{-\frac{1}{2}} \right) \bm{v} = \lambda \bm{v}.
$$
Define $\bm{\eta} := \bcirc(\mA)^{-\frac{1}{2}} \bm{v} \neq 0$. Then $\bcirc(\mB)\bm{\eta} = \lambda \bcirc(\mA)\bm{\eta}$.
Left-multiplying by $\bm{\eta}^*$ gives $\bm{\eta}^* \bcirc(\mB) \bm{\eta} = \lambda \bm{\eta}^* \bcirc(\mA) \bm{\eta}$.
Let $a = \bm{\eta}^{*} \bcirc(\mA) \bm{\eta}$ and $b = \bm{\eta}^{*} \bcirc(\mB) \bm{\eta}$.
Since $\mA, \mB \in \mathbb{T}^{n\times n\times p}_{+}$, both $a$ and $b$ lie in the open right half-plane of $\mathbb{C}$, so $\lambda = b/a$.
Since $\angle a,\angle b\in(-\pi/2, \pi/2)$, $\lambda$ satisfies $\angle \lambda = \angle b - \angle a \in (-\pi, \pi)$.
This implies that $\lambda$ does not lie on the negative real axis $(-\infty, 0]$, and thus the principal square root is well-defined.
Therefore,
\begin{align*}
\mA^{\frac{1}{2}} * \mX^{-1} * \mA^{\frac{1}{2}} =& \frac{2}{\pi} \int_{0}^{\infty} \mA^{\frac{1}{2}}  *\left(\mB + t^2\mA \right)^{-1} * \mA^{\frac{1}{2}} \, dt \numberthis \label{eq:3.3} \\
=& \frac{2}{\pi} \int_{0}^{\infty} \left(\mA^{-\frac{1}{2}} * \mB * \mA^{-\frac{1}{2}} + t^2\mI_{np} \right)^{-1}  \, dt\\
=& \left(\mA^{-\frac{1}{2}} * \mB * \mA^{-\frac{1}{2}} \right)^{-\frac{1}{2}}.
\end{align*}
Taking inverses in \eqref{eq:3.3} yields
\[
\mX=\mA^{\frac{1}{2}} * \left(\mA^{-\frac{1}{2}} * \mB * \mA^{-\frac{1}{2}}\right)^{\frac{1}{2}} * \mA^{\frac{1}{2}}.
\]
Since \eqref{eq:extend_geometric_mean} is symmetric in $(\mA,\mB)$, the same argument with $\mA$ and $\mB$ interchanged gives
\[
\mX=\mB^{\frac{1}{2}} * \left(\mB^{-\frac{1}{2}} * \mA * \mB^{-\frac{1}{2}}\right)^{\frac{1}{2}} * \mB^{\frac{1}{2}}.
\]
\end{proof}
As an immediate consequence, we obtain:
\begin{theorem}
     Let $\mA,\mB \in \mathbb{T}^{n\times n\times p}_{+}$ and $\mX = \mA \# \mB$. Then $\mX * \mA^{-1} * \mX = \mB$.
\end{theorem}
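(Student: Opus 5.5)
We will verify the Riccati-type identity $\mX*\mA^{-1}*\mX=\mB$ by substituting the closed-form expression for $\mX$ from the preceding theorem:
\[
\mX=\mA^{\frac12}*\mC^{\frac12}*\mA^{\frac12},\qquad \mC:=\mA^{-\frac12}*\mB*\mA^{-\frac12}.
\]
Then
\[
\mX*\mA^{-1}*\mX=\mA^{\frac12}*\mC^{\frac12}*\mA^{\frac12}*\mA^{-1}*\mA^{\frac12}*\mC^{\frac12}*\mA^{\frac12}.
\]
The middle factor collapses via $\mA^{\frac12}*\mA^{-1}*\mA^{\frac12}=\mI_{np}$. This leaves $\mA^{\frac12}*\mC^{\frac12}*\mC^{\frac12}*\mA^{\frac12}=\mA^{\frac12}*\mC*\mA^{\frac12}=\mB$, using $\mA^{\frac12}*\mA^{-\frac12}=\mI_{np}$ on both sides.

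The only points needing justification are the algebraic identities for principal powers.

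First, $\mA^{\frac12}*\mA^{\frac12}=\mA$, which was recorded just before the closed-form theorem.

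Second, $\mA^{-\frac12}=(\mA^{\frac12})^{-1}$, which the paper notes holds when $\mA^{\frac12}$ is nonsingular. Nonsingularity follows because $\mA^{\frac12}\in\mathbb{T}_+^{n\times n\times p}$, so $0\notin W(\mA^{\frac12})$. Consequently $\mA^{-1}=\mA^{-\frac12}*\mA^{-\frac12}$, and hence $\mA^{\frac12}*\mA^{-1}*\mA^{\frac12}=\mI_{np}$.

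Third, $\mC^{\frac12}*\mC^{\frac12}=\mC$ for the principal square root of $\mC$. The proof of the closed form showed that the spectrum of $\bcirc(\mC)$ avoids $(-\infty,0]$, so the principal square root is defined by the Dunford--Taylor integral \eqref{eq:matrix_t_power}. Squaring it recovers $\mC$ by the multiplicativity of the holomorphic functional calculus, since $(z^{1/2})^2=z$ off the branch cut.

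All of these statements can be checked blockwise in the Fourier domain via Lemma~\ref{LemmaFourierBlockDiag}, because each power is defined slice-by-slice as $\bm A_i^{\alpha}$. Alternatively, one can pass to $\bcirc$ and use $\bcirc(\mA*\mB)=\bcirc(\mA)\bcirc(\mB)$, which reduces everything to the known matrix facts.

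The main (mild) obstacle is the third point: $\mC$ is not in $\mathbb{T}_+$ in general, only spectrally off the negative axis. So one must invoke the functional-calculus identity $f(\mC)g(\mC)=(fg)(\mC)$ rather than any accretivity-based uniqueness result. Everything else is direct cancellation.
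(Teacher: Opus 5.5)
Your proof is correct and is essentially the paper's argument: both verify the Riccati identity by direct algebra from the closed form $\mX=\mA^{1/2}*\mC^{1/2}*\mA^{1/2}$. The only difference is that the paper squares the inverted form $\mA^{1/2}*\mX^{-1}*\mA^{1/2}=\mC^{-1/2}$ from \eqref{eq:3.3} and then inverts, whereas you square $\mC^{1/2}$ directly and skip the final inversion. Your remark that $\mC$ need not lie in $\mathbb{T}_+^{n\times n\times p}$, so that $(\mC^{1/2})^2=\mC$ must come from the holomorphic functional calculus rather than from accretivity, is correct and makes explicit a point the paper leaves implicit.
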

\begin{proof}
By squaring \eqref{eq:3.3} as:
\begin{equation}\label{eq:3.4}
    \mA^{\frac{1}{2}} * \mX^{-1} * \mA * \mX^{-1} * \mA^{\frac{1}{2}} = \mA^{\frac{1}{2}} * \mB^{-1} * \mA^{\frac{1}{2}}.
\end{equation}
Premultiplying and postmultiplying by $\mA^{-1/2}$ gives
\[
\mX^{-1} * \mA * \mX^{-1} = \mB^{-1}.
\]
Taking inverses yields $\mX * \mA^{-1} * \mX = \mB$.
\end{proof}
The next theorem establishes the uniqueness of the characterization \eqref{eq:extend_geometric_mean}.
\begin{theorem}
Let $\mA, \mB, \mY \in \mathbb{T}^{n\times n\times p}_{+}$ and set $\mX:=\mA\#\mB$.
If $\mY * \mA^{-1} * \mY = \mB$, then $\mY = \mX$.
\end{theorem}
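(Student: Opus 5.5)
The plan is to reduce the Riccati-type equation $\mY*\mA^{-1}*\mY=\mB$ to a square-root equation and then use uniqueness of the principal square root in $\mathbb{T}_{+}^{n\times n\times p}$. Write $\mathcal{M}:=\mA^{-\frac12}*\mB*\mA^{-\frac12}$. Conjugating the hypothesis by $\mA^{-\frac12}$ gives
\[
\big(\mA^{-\frac12}*\mY*\mA^{-\frac12}\big)*\big(\mA^{-\frac12}*\mY*\mA^{-\frac12}\big)=\mathcal{M},
\]
so $\mZ:=\mA^{-\frac12}*\mY*\mA^{-\frac12}$ is a square root of $\mathcal{M}$. By the closed-form theorem above, $\mA^{-\frac12}*\mX*\mA^{-\frac12}=\mathcal{M}^{\frac12}$, the principal square root. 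Hence it suffices to show $\mZ=\mathcal{M}^{\frac12}$. This identifies $\mY$ with $\mX$ after conjugating back by $\mA^{\frac12}$.

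To identify $\mZ$ as the principal root, I would show that every T-eigenvalue of $\mZ$ lies in the open right half-plane. Work with $\bcirc$, or equivalently with each Fourier block via Lemma~\ref{LemmaFourierBlockDiag}. Suppose $\bcirc(\mZ)\bm v=\mu\bm v$ with $\bm v\ne0$, and set $\bm\eta=\bcirc(\mA)^{-\frac12}\bm v$. Then $\bcirc(\mY)\bm\eta=\mu\,\bcirc(\mA)\bm\eta$, which is the same generalized-eigenvalue trick used in the proof of the closed form. Taking $\bm\eta^*(\cdot)$ gives $\mu=b/a$ with $a=\bm\eta^*\bcirc(\mA)\bm\eta$ and $b=\bm\eta^*\bcirc(\mY)\bm\eta$, both in the open right half-plane because $\mA,\mY\in\mathbb{T}_{+}^{n\times n\times p}$. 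This only gives $\angle\mu\in(-\pi,\pi)$, which is not enough.

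Here is the refinement. Since $\mu^2$ is an eigenvalue of $\mathcal{M}$, we have $\mu^2=\lambda=b'/a'$ with $a'=\bm\eta^*\bcirc(\mA)\bm\eta$ and $b'=\bm\eta^*\bcirc(\mB)\bm\eta$, so $\angle\lambda\in(-\pi,\pi)$ and $\mu\neq0$ is not purely imaginary. Thus $\mu=\pm\sqrt{\lambda}$ (principal root). The eigenvalues of $\mZ$ are continuous along the segment $\mY_s=(1-s)\mX+s\mY$, which stays in $\mathbb{T}_+$ by convexity. However, $\mY_s$ does not solve the equation, so this continuity argument fails. Instead, combine the two facts directly. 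From $\mu=b/a$ we get $|\angle\mu|<\pi$. From $\mu^2=\lambda$ with $|\angle\lambda|<\pi$ we get $\angle\mu\in(-\pi/2,\pi/2)$ or $|\angle\mu|>\pi/2$. To exclude the second option, use $\bcirc(\mY)\bm\eta=\mu\bcirc(\mA)\bm\eta$ and $\bcirc(\mY)\bcirc(\mA)^{-1}\bcirc(\mY)\bm\eta=\bcirc(\mB)\bm\eta$. Set $\bm w=\bcirc(\mA)\bm\eta$. Then
\[
\mu\,\bm w^*\bcirc(\mA)^{-1}\bm w=\mu\,\bm\eta^*\bcirc(\mA)^{*}\bm\eta=\mu\,\overline{a}.
\]
Also $\bm w^*\bcirc(\mA)^{-1}\bcirc(\mY)\bm\eta=\mu\,\bm\eta^*\bcirc(\mA)^*\bm\eta$. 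I expect this angle bookkeeping to be the main obstacle. The cleanest route I would try first is to show that $\Re\langle\mathcal{N},\mZ*\mathcal{N}\rangle_F>0$, since $\mZ$ is a congruence-like transform of $\mY$. Indeed $\mA^{-\frac12}$ is not T-Hermitian, so instead I would use $\mu=b/a$ together with $\mu^2=b'/a'$ and the identity $b^2/a^2=b'/a'$ to bound $|\angle b-\angle a|<\pi/2$ when all of $a$, $b$, $b'$ lie in the right half-plane. If that fails, I would fall back on the matrix result (Drury's uniqueness for accretive matrices \citep{drury2015principal}) applied to $\bcirc(\mA)$, $\bcirc(\mB)$, $\bcirc(\mY)$, all of which are accretive matrices.

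Indeed, the simplest complete route is this lifting. Applying $\bcirc$ turns the hypothesis into $\bcirc(\mY)\bcirc(\mA)^{-1}\bcirc(\mY)=\bcirc(\mB)$, with all three matrices strictly accretive. Drury's matrix uniqueness theorem then gives $\bcirc(\mY)=\bcirc(\mA)\#\bcirc(\mB)$, which equals $\bcirc(\mX)$ by Proposition~\ref{PropBcircGeometricMean}. Injectivity of $\bcirc$ then gives $\mY=\mX$. I would present this as the proof, with the spectral argument as motivation.
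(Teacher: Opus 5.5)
Your final argument is correct, but it takes a different route from the paper's.

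\textbf{Your route.} You lift the equation with $\bcirc$: multiplicativity and \eqref{eq:Tinverse} give $\bcirc(\mY)\bcirc(\mA)^{-1}\bcirc(\mY)=\bcirc(\mB)$. You then note that $\bcirc$ sends $\mathbb{T}_{+}^{n\times n\times p}$ to strictly accretive matrices. You should state why: $\langle\mX,\mA*\mX\rangle_F=\unfold(\mX)^H\bcirc(\mA)\unfold(\mX)$ and $\unfold$ is onto $\C^{np}$. Finally you invoke the matrix uniqueness theorem for Drury's geometric mean and pull back with Proposition~\ref{PropBcircGeometricMean} and injectivity of $\bcirc$.

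\textbf{The paper's route.} The paper argues directly in the T-product calculus. From $\mY^{-1}*\mA*\mY^{-1}=\mB^{-1}$ and $\mY^{-1}*\mB*\mY^{-1}=\mA^{-1}$ it obtains $\mY*(t\mA+t^{-1}\mB)^{-1}*\mY=(t\mB^{-1}+t^{-1}\mA^{-1})^{-1}$. Integrating gives $\mY*\mX^{-1}*\mY=(\mB^{-1}\#\mA^{-1})^{-1}=\mX$, hence $(\mY*\mX^{-1})^2=\mI_{np}$. It then excludes the eigenvalue $-1$ of $\mY*\mX^{-1}$ by strict accretivity of $\mY$ and $\mX$.

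\textbf{Comparison.} Your version is shorter and parallels how the paper proves Theorem~\ref{thm:geometric_mean1}. However, all of its content sits in the cited matrix result, so you must quote that result precisely: uniqueness among solutions with positive definite real part. The paper's proof needs no external uniqueness statement.

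\textbf{The spectral route you abandoned.} It actually closes and gives a second self-contained proof. With your $\bm\eta$, the numbers $a$, $b=\mu a$ and $b'=\mu^2a=\mu b$ all lie in the open right half-plane. Each of $\angle b-\angle a$ and $\angle b'-\angle b$ lies in $(-\pi,\pi)$ and is congruent to $\angle\mu$ modulo $2\pi$, so $\angle\mu=\angle b-\angle a=\angle b'-\angle b$. Adding these gives $2\angle\mu=\angle b'-\angle a\in(-\pi,\pi)$, so $\Re\mu>0$. Thus $\mZ$ is a square root of $\mathcal{M}$ with spectrum in the open right half-plane. By \citep[Theorem~1.29]{higham2008functions} this forces $\mZ=\mathcal{M}^{1/2}$, and $\mY=\mX$ follows from the closed form. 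This route uses only the closed form and uniqueness of the principal root, with no integral manipulation.

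In a written proof, drop the continuity paragraph and the $\bm w$ computations, which lead nowhere.
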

\begin{proof}
Since $\mY^{-1} * \mA * \mY^{-1} = \mB^{-1}$ and $\mY^{-1} * \mB * \mY^{-1} = \mA^{-1}$, we obtain
\begin{equation*}
\mY^{-1} * \left(t\mA + t^{-1}\mB \right) * \mY^{-1} = t\mB^{-1} + t^{-1}\mA^{-1}.
\end{equation*}
Hence,
\begin{equation*}
\mY * \left( t\mA + t^{-1}\mB \right)^{-1} * \mY = \left( t\mB^{-1} + t^{-1}\mA^{-1} \right)^{-1}.
\end{equation*}
Substituting into \eqref{eq:extend_geometric_mean} gives the following.
\begin{equation*}
\mY * \mX^{-1} * \mY = \frac{2}{\pi} \int_{0}^{\infty} \left( t\mB^{-1} + t^{-1}\mA^{-1} \right)^{-1} \frac{dt}{t} = \left( \mB^{-1} \# \mA^{-1} \right)^{-1}.
\end{equation*}
By the closed-form identity above (applied to $(\mB^{-1},\mA^{-1})$) and $(\mZ^{-1})^{1/2}=(\mZ^{1/2})^{-1}$ for principal roots,
\[
\left( \mB^{-1} \# \mA^{-1} \right)^{-1}
= \mB^{\frac{1}{2}} * \left(\mB^{-\frac{1}{2}} * \mA * \mB^{-\frac{1}{2}}\right)^{\frac{1}{2}} * \mB^{\frac{1}{2}}
= \mX.
\]
% It can be deduced as $\left(\mY * \mX^{-1}\right)^{-1} = \mI$, and since $\mY * \mX^{-1}$ can not have negative eigenvalue, $\mY * \mX^{-1} = \mI$ and $\mY = \mX$.
From $\mY * \mX^{-1} * \mY = \mX$, we have $(\mY * \mX^{-1})^2 = \mI_{np}$.
Thus, the eigenvalues of $\bcirc(\mY * \mX^{-1})$ are $\pm 1$.
Suppose $-1$ is an eigenvalue with eigenvector $\bm{v}\neq 0$, i.e., $\bcirc(\mY * \mX^{-1})\bm{v} = -\bm{v}$.
Let $\bm{\eta}:=\bcirc(\mX^{-1})\bm{v}\neq 0$. Then $\bcirc(\mY)\bm{\eta} = -\bcirc(\mX)\bm{\eta}$, hence
$\bm{\eta}^*\bcirc(\mY)\bm{\eta} = -\bm{\eta}^*\bcirc(\mX)\bm{\eta}$.
Since $\mY, \mX \in \mathbb{T}_{+}^{n\times n\times p}$, we have
$\Re(\bm{\eta}^*\bcirc(\mY)\bm{\eta})>0$ and $\Re(\bm{\eta}^*\bcirc(\mX)\bm{\eta})>0$, so
$\Re(-\bm{\eta}^*\bcirc(\mX)\bm{\eta})<0$, a contradiction.
Therefore, $-1$ is not an eigenvalue of $\mY * \mX^{-1}$.
Together with $(\mY * \mX^{-1})^2=\mI_{np}$, this gives
\[
(\mY * \mX^{-1}-\mI_{np})*(\mY * \mX^{-1}+\mI_{np})=\mO,
\]
and $\mY * \mX^{-1}+\mI_{np}$ is invertible (its eigenvalues are all $2$), hence $\mY * \mX^{-1}=\mI_{np}$ and $\mY=\mX$.
\end{proof}

\begin{proposition}\label{prop:congruence_geomean}
Let $\mA,\mB\in\mathbb{T}^{n\times n\times p}_{+}$ and let $\mT\in\C^{n\times n\times p}$ be nonsingular. Then
\[
(\mT^{H} * \mA * \mT)\#(\mT^{H} * \mB * \mT)=\mT^{H}*(\mA\#\mB)*\mT.
\]
\end{proposition}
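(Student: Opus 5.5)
The plan is to use the uniqueness characterization proved just above. If $\mY\in\mathbb{T}^{n\times n\times p}_{+}$ satisfies $\mY*\mA'^{-1}*\mY=\mB'$ for $\mA',\mB'\in\mathbb{T}^{n\times n\times p}_{+}$, then $\mY=\mA'\#\mB'$. So I would set $\mA':=\mT^H*\mA*\mT$, $\mB':=\mT^H*\mB*\mT$ and $\mY:=\mT^H*(\mA\#\mB)*\mT$, and check the hypotheses.

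The first step is membership in $\mathbb{T}^{n\times n\times p}_{+}$. Congruence preserves strict accretivity: for $\mX\neq 0$,
\[
\Re\langle \mX,\mT^H*\mA*\mT*\mX\rangle_F=\Re\langle \mT*\mX,\mA*(\mT*\mX)\rangle_F>0,
\]
because $\mT*\mX\neq 0$ when $\mT$ is nonsingular. To justify the adjoint identity $\langle \mX,\mT^H*\mZ\rangle_F=\langle \mT*\mX,\mZ\rangle_F$, I would pass to $\bcirc$ and $\unfold$, using $\bcirc(\mT^H)=\bcirc(\mT)^H$ and $\langle \mX,\mZ\rangle_F=\unfold(\mX)^H\unfold(\mZ)$ summed appropriately. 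This gives $\mA',\mB'\in\mathbb{T}^{n\times n\times p}_{+}$.

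The second step concerns $\mY$. I need $\mA\#\mB\in\mathbb{T}^{n\times n\times p}_{+}$, and then the same congruence argument gives $\mY\in\mathbb{T}^{n\times n\times p}_{+}$. This is the main obstacle, since the paper uses accretivity of $\mX=\mA\#\mB$ implicitly in the uniqueness theorem but never proves it. I would take it from the integral formula \eqref{eq:extend_geometric_mean}. For each $t>0$, $t\mA+t^{-1}\mB$ is strictly accretive, so its inverse is strictly accretive: if $\bm w=\bm M^{-1}\bm v$, then $\Re \bm v^*\bm M^{-1}\bm v=\Re \bm w^*\bm M^H\bm w>0$. Integrating gives $\Re\langle\mX,(\mA\#\mB)^{-1}*\mX\rangle_F>0$, where strictness comes from the integrand being positive on a set of positive measure. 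Inverting once more shows $\mA\#\mB$ is strictly accretive. Alternatively, via Proposition~\ref{PropBcircGeometricMean}, I could cite Drury's result that the matrix mean of accretive matrices is accretive.

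The third step is the algebraic identity. Since $\mA'^{-1}=\mT^{-1}*\mA^{-1}*\mT^{-H}$,
\[
\mY*\mA'^{-1}*\mY=\mT^H*(\mA\#\mB)*\mA^{-1}*(\mA\#\mB)*\mT=\mT^H*\mB*\mT=\mB',
\]
using the earlier theorem $\mX*\mA^{-1}*\mX=\mB$. The uniqueness theorem then yields $\mY=\mA'\#\mB'$, which is the claim.

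As a cross-check, the statement can also be verified directly from the integral. Since
\[
(t\mA'+t^{-1}\mB')^{-1}=\mT^{-1}*(t\mA+t^{-1}\mB)^{-1}*\mT^{-H},
\]
integrating gives $(\mA'\#\mB')^{-1}=\mT^{-1}*(\mA\#\mB)^{-1}*\mT^{-H}$, and inverting gives the same result.
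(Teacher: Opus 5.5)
Your proposal is correct and follows essentially the same route as the paper: show that $\mT^{H}*(\mA\#\mB)*\mT$ is strictly accretive and solves $\mY*(\mT^{H}*\mA*\mT)^{-1}*\mY=\mT^{H}*\mB*\mT$, then invoke the uniqueness theorem. You also prove that $\mA\#\mB\in\mathbb{T}^{n\times n\times p}_{+}$, via accretivity of $(t\mA+t^{-1}\mB)^{-1}$ under the integral, which the paper only asserts; and your integral cross-check is by itself a complete, more direct proof that does not need the uniqueness theorem at all.
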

\begin{proof}
Set $\widetilde{\mA}:=\mT^{H} * \mA * \mT$ and $\widetilde{\mB}:=\mT^{H} * \mB * \mT$.
Since $\mA,\mB\in\mathbb{T}_{+}^{n\times n\times p}$ and $\mT$ is nonsingular, we also have $\widetilde{\mA},\widetilde{\mB}\in\mathbb{T}_{+}^{n\times n\times p}$.
Let $\mX:=\mA\#\mB$, so $\mX * \mA^{-1} * \mX = \mB$.
Define $\mY:=\mT^{H} * \mX * \mT$. Using associativity and $(\mT^{H} * \mA * \mT)^{-1}=\mT^{-1} * \mA^{-1} * \mT^{-H}$,
\[
\mY * \widetilde{\mA}^{-1} * \mY
=\mT^{H} * (\mX * \mA^{-1} * \mX) * \mT
=\mT^{H} * \mB * \mT
=\widetilde{\mB}.
\]
Hence $\mY$ solves $\mZ * \widetilde{\mA}^{-1} * \mZ=\widetilde{\mB}$.
Since $\mX\in\mathbb{T}_{+}^{n\times n\times p}$ and congruence preserves strict accretivity, also $\mY\in\mathbb{T}_{+}^{n\times n\times p}$.
By the uniqueness theorem above, $\mY=\widetilde{\mA}\#\widetilde{\mB}$, proving the claim.
\end{proof}

We recall a standard device for describing unitary invariance via singular values.
A mapping $\Phi:\R^{np}\to\R$ is called a \emph{symmetric gauge function} if it is a norm and is invariant under signed permutations; equivalently, $\Phi(\bm x)=\Phi(\bm P\bm J\bm x)$ for every permutation matrix $\bm P$ and every diagonal sign matrix $\bm J$ with diagonal entries $\pm1$~\citep{mirsky1960symmetric}.
Given such a $\Phi$ and a tensor $\mA\in\C^{n\times n\times p}$, we define the induced (T-product) unitarily invariant norm by
$$
\|\mA\|_{\Phi} := \Phi(\sigma(\mA)).
$$
Conversely, a tensor norm is unitary invariant (i.e., $\|\mU^{H} * \mA * \mV\|=\|\mA\|$ for all unitary
$\mU,\mV$) if and only if it arises from some symmetric gauge function in the above way; cf.~\citep{horn2012matrix,bhatia2013matrix}.

Unitarily invariant norms are closely connected to Ky--Fan type majorization inequalities; see, for instance, \citep[Sec.~III]{bhatia2013matrix}. For tensors $\mX, \mY \in \C^{n \times n \times p}$, we define their arithmetic mean by
\begin{equation}\label{eq:arithmetic mean} 
\mX \nabla \mY := \frac{\mX + \mY}{2}. 
\end{equation}
When $\mX, \mY \in \mathbb{H}^{n\times n\times p}$, the Ky--Fan inequality gives the eigenvalue majorization
\begin{equation}\label{eq:KyFan}
2\lambda(\mX \nabla \mY) = \lambda(\mX + \mY) \prec \lambda^{\downarrow}(\mX) + \lambda^{\downarrow}(\mY).
\end{equation}

\begin{proposition}\label{PropTSVDNorm}
If $\mA \in \C^{m \times n \times p}$ has a T-SVD $\mA=\mU*\mS*\mV^H$, then $\|\mA\|_{F}^{2}=\|\mS\|_{F}^{2}=\frac{1}{p}\sum_{i=1}^{p} \|\bm{S}_{i}\|_{F}^{2}$, where $\bm{S}_{i}$ are the diagonal factors in the SVDs of the Fourier-domain slices of $\mA$ (cf.~Lemma~\ref{LemmaFourierBlockDiag}).
\end{proposition}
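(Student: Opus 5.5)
The plan is to reduce the Frobenius norm of $\mA$ to the Frobenius norm of $\bcirc(\mA)$, and then to use the Fourier block diagonalization of Lemma~\ref{LemmaFourierBlockDiag}. Each frontal slice $\bm A^{(k)}$ appears exactly $p$ times in $\bcirc(\mA)$ (once in each block column). Hence
\[
\|\bcirc(\mA)\|_F^2=p\,\|\mA\|_F^2 .
\]
This is a direct count of entries, and it is the only place where the tensor structure enters.

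Next, by \eqref{eqFourierBlockDiagMN}, $\bcirc(\mA)$ is unitarily equivalent to $\diag(\bm A_1,\ldots,\bm A_p)$, because $\bm F_p\otimes\bm I_m$ and $\bm F_p\otimes\bm I_n$ are unitary. The Frobenius norm is unitarily invariant, so
\[
p\|\mA\|_F^2=\sum_{i=1}^p\|\bm A_i\|_F^2 .
\]
Writing each Fourier slice as $\bm A_i=\bm U_i\bm S_i\bm V_i^H$ and using unitary invariance again gives $\|\bm A_i\|_F=\|\bm S_i\|_F$. This yields $\|\mA\|_F^2=\frac1p\sum_i\|\bm S_i\|_F^2$.

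For the middle equality $\|\mA\|_F=\|\mS\|_F$, I use the fact that the T-SVD is built from these slice SVDs. The Fourier-domain slices of $\mS$ are exactly the $\bm S_i$ (and $\mU,\mV$ have Fourier slices $\bm U_i,\bm V_i$). Applying the identity just proved to $\mS$ in place of $\mA$ gives $\|\mS\|_F^2=\frac1p\sum_i\|\bm S_i\|_F^2$. Alternatively, $\|\mS\|_F=\|\mA\|_F$ follows from $\bcirc(\mA)=\bcirc(\mU)\bcirc(\mS)\bcirc(\mV)^H$ with $\bcirc(\mU),\bcirc(\mV)$ unitary, combined with the scaling $\|\bcirc(\cdot)\|_F^2=p\|\cdot\|_F^2$.

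The argument is routine, and the only point needing care is bookkeeping of the normalization. The factor $1/p$ comes from the $p$-fold repetition in $\bcirc$ together with the unitary normalization of $\bm F_p$. If the DFT were taken unnormalized, as in FFT-based definitions, the factor would change. I will therefore state explicitly that the $\bm A_i$ are the diagonal blocks in \eqref{eqFourierBlockDiagMN} with unitary $\bm F_p$, and verify the count $\|\bcirc(\mA)\|_F^2=p\|\mA\|_F^2$ carefully.
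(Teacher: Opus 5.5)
The paper states this proposition without proof, treating it as a standard T-SVD fact, so there is no argument of its own to compare with. Your proof is correct and complete. The entry count $\|\bcirc(\mA)\|_F^2=p\|\mA\|_F^2$, unitary invariance under the factors $\bm F_p\otimes\bm I$ in \eqref{eqFourierBlockDiagMN}, and $\|\bm A_i\|_F=\|\bm S_i\|_F$ together give $\|\mA\|_F^2=\frac1p\sum_i\|\bm S_i\|_F^2$. Your second route to $\|\mS\|_F=\|\mA\|_F$, using that $\bcirc(\mU)$ and $\bcirc(\mV)$ are unitary, is the more robust one: it works for any T-SVD, not only one assembled from slice SVDs.

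One correction to your closing remark, which has the normalization caveat backwards. The diagonal blocks in \eqref{eqFourierBlockDiagMN} with unitary $\bm F_p$ are exactly the \emph{unnormalized} FFT slices along the third mode, because the eigenvalues of a circulant are the unnormalized DFT of its first column. The factor $1/p$ is therefore precisely Parseval's identity for the unnormalized FFT, consistent with the lemma's phrase ``frontal slices of the FFT''. It is the \emph{unitary} DFT along the third mode that would change the factor to $1$. This does not affect your proof, since you work directly with the blocks of \eqref{eqFourierBlockDiagMN}, but the caveat as written should be dropped or reversed.
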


Let all diagonal elements of $\diag(\bm{S}_{1},\bm{S}_{2},\ldots,\bm{S}_{p})$ be called the T-singular values of $\mA$.
Then a T-SVD of $\mA \in \C^{m\times n\times p}$ is denoted by
\begin{equation}
    \mA = \bcirc^{-1}\left((\bm{F}_p^H \otimes \bm{I}_m) \cdot \left(\sum_{i=1}^{\min\{m,n\}\cdot p} \sigma_{i}(\mA) \bm{u}_{i} \bm{v}_{i}^{H} \right) \cdot (\bm{F}_p \otimes \bm{I}_n)\right),
\end{equation}
where $\sigma_{i}(\mA)$ is the $i$-th diagonal element of $\diag(\bm{S}_{1}, \bm{S}_{2},\ldots, \bm{S}_{p})$ and $\bm{u}_{i}$ and $\bm{v}_{i}$ are the $i$-th column of $\diag(\bm{U}_{1}, \bm{U}_{2},\ldots, \bm{U}_{p})$ and $\diag(\bm{V}_{1},\bm{V}_{2},\ldots,\bm{V}_{p})$, respectively.
Define the set of best rank-$r$ T-SVD truncations of $\mA$ as
\begin{equation}\label{eq:T_SVD_r_approximation_1}
    \begin{aligned}
        \mathcal{S}_{g}(\mA,r):= \bigl\{ \mA_R ~\big|~ R \subset \{1,2,\ldots,\min\{m,n\}\cdot p\},\ |R| = r, \\
        \sigma_i(\mA)\ge \sigma_j(\mA)\ \forall i\in R,\ \forall j\notin R \bigr\}
    \end{aligned}
\end{equation}
where $\mA_R = \bcirc^{-1}\left((\bm{F}_p^H \otimes \bm{I}_m) \cdot \left(\sum_{i\in R} \sigma_{i}(\mA) \bm{u}_{i} \bm{v}_{i}^{H} \right) \cdot (\bm{F}_p \otimes \bm{I}_n)\right)$.

We will repeatedly compare our phase-based model to the classical T-SVD rank model, so we briefly
recall the standard best rank-$r$ approximation problem.
Fix $\mA\in\C^{m\times n\times p}$, an integer $r\in[0,p\min\{m,n\}]$, and a unitarily invariant norm
$\|\cdot\|$ on $\C^{m\times n\times p}$.
\begin{problem}[Low-Rank Approximation]\label{pm:low_rank_approx}
    Let $\mA \in \mathbb{C}^{m \times n \times p}$ and let $\| \cdot \|$ be a unitarily invariant norm.
    Determine
    \begin{equation}\label{eq:low_rank_approx}
        \widehat{\mE} \in \arg\min_{\mE} \bigl\{ \|\mA - \mE\| \;|\; \rank(\mE) \leq r \bigr\},
    \end{equation}
    where $\rank(\cdot)$ denotes the T-SVD induced tensor rank.
\end{problem}
For $\|\cdot\|=\|\cdot\|_F$, one may restrict attention to T-SVD truncations in $\mathcal{S}_g(\mA,r)$, and the optimal value is attained by keeping the $r$ largest T-singular values.
The next lemma summarizes the corresponding tensor analogue of the Schmidt--Mirsky characterization; cf.~\citep[Chap.~IV]{stewart1990matrix}.
\begin{lemma}\label{thm:Schmidt-Mirsky}
    Let $\mA \in \C^{m\times n \times p}$ and let $\|\cdot \|$ be a unitarily invariant norm on
    $\C^{m\times n \times p}$. For $0\leq r\leq p\cdot\min\{m,n\}$,
    $$
    \min_{\rank(\mE) \leq r} \|\mA - \mE \|
    = \left\| \diag\!\bigl(0,\ldots,0,\sigma_{r+1}(\mA),\ldots,\sigma_{p\min\{m,n\}}(\mA)\bigr) \right\|,
    $$
    and every truncated T-SVD approximation $\mE\in\mathcal{S}_g(\mA,r)$ attains the minimum (i.e.,
    it is obtained by zeroing all but the $r$ largest T-singular values).
\end{lemma}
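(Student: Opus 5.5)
The natural route is to move the problem into the Fourier domain, where it splits into independent matrix problems. The link to matrices is that, by Lemma~\ref{LemmaFourierBlockDiag}, T-singular values are singular values of block-diagonal matrices. Concretely, write $\bcirc(\mA)=(\bm F_p^H\otimes \bm I_m)\,\diag(\bm A_1,\ldots,\bm A_p)\,(\bm F_p\otimes \bm I_n)$. The factors $\bm F_p\otimes \bm I_n$ and $\bm F_p^H\otimes\bm I_m$ are unitary. Hence $\sigma(\mA)$ is exactly the multiset of singular values of the block-diagonal matrix $\bm A_{\mathrm{diag}}:=\diag(\bm A_1,\ldots,\bm A_p)$, that is, the union over $i$ of the singular values of $\bm A_i$, sorted globally.

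I would take the T-SVD induced rank to mean $\rank(\mE)=\rank(\bcirc(\mE))$. This equals $\sum_i \rank(\bm E_i)$, the number of nonzero T-singular values. With this convention:
\begin{itemize}
\item the map $\mE\mapsto \bm E_{\mathrm{diag}}$ is a bijection from $\C^{m\times n\times p}$ onto the block-diagonal matrices with $p$ blocks of size $m\times n$;
\item it preserves the norm $\|\cdot\|_\Phi$, because $\|\mA-\mE\|_\Phi=\Phi(\sigma(\bm A_{\mathrm{diag}}-\bm E_{\mathrm{diag}}))$;
\item it preserves the rank.
\end{itemize}

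For the lower bound, take any $\mE$ with $\rank(\mE)\le r$. Then $\bm E_{\mathrm{diag}}$ is an $mp\times np$ matrix of rank at most $r$. The classical matrix Schmidt--Mirsky theorem for unitarily invariant norms applies to $\bm A_{\mathrm{diag}}$ \emph{without} the block-diagonal restriction. It gives
\[
\Phi(\sigma(\bm A_{\mathrm{diag}}-\bm E_{\mathrm{diag}}))\ \ge\ \Phi\bigl(0,\ldots,0,\sigma_{r+1},\ldots,\sigma_{p\min\{m,n\}}\bigr).
\]
Since these are the T-singular values, this is the claimed bound. The precise matrix ingredient I would cite is Mirsky's inequality: the weak majorization $\sigma(\bm X-\bm Y)\succ_w |\sigma(\bm X)-\sigma(\bm Y)|$, combined with $\sigma_k(\bm Y)=0$ for $k>r$. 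The weak majorization inequality then transfers to every symmetric gauge $\Phi$ by the Fan dominance principle.

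For attainment, take $\mA_R\in\mathcal{S}_g(\mA,r)$. By construction $\bcirc(\mA_R)$ corresponds to the block-diagonal matrix $\sum_{i\in R}\sigma_i\bm u_i\bm v_i^H$. Each $\bm u_i\bm v_i^H$ lives in a single Fourier block, so this matrix is block diagonal and $\mA_R$ is a genuine tensor. The difference $\bm A_{\mathrm{diag}}-(\mA_R)_{\mathrm{diag}}=\sum_{i\notin R}\sigma_i\bm u_i\bm v_i^H$ is again an SVD, because the $\bm u_i$ are orthonormal columns of $\diag(\bm U_1,\ldots,\bm U_p)$ and likewise the $\bm v_i$. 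Its singular values are therefore $\{\sigma_i\}_{i\notin R}$, padded with zeros. Because $R$ collects the $r$ largest values, ties included, this multiset equals $(\sigma_{r+1},\ldots)$ up to order. By permutation invariance of $\Phi$ the bound is attained, and $\rank(\mA_R)=r$.

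The main obstacle is consistency of conventions. The rank notion must be the one equal to the count of nonzero T-singular values; if "rank" meant tubal rank instead, the statement would be false, so I would state the convention explicitly. The DFT normalization must also match, since Proposition~\ref{PropTSVDNorm} carries a factor $1/p$. Using the unitary $\bm F_p$ as in Lemma~\ref{LemmaFourierBlockDiag} avoids any rescaling. Everything else is a direct transfer of the matrix theorem.
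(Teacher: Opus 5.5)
Your proposal is correct and follows essentially the same route as the paper. Both arguments use the convention $\rank(\mE)=\rank(\bcirc(\mE))$, reduce the lower bound to the matrix Schmidt--Mirsky theorem on an $mp\times np$ matrix unitarily equivalent to $\bcirc(\mA)$ while ignoring its block structure, and attain the bound with the truncated T-SVD; the differences are cosmetic (you use the Fourier block-diagonal form and Mirsky's weak majorization with Fan dominance, where the paper uses $\bcirc$ directly and the Weyl-type bound $\sigma_k(\bcirc(\mA)-\bm B)\ge\sigma_{r+k}(\bcirc(\mA))$), except that you should write $\rank(\mA_R)\le r$ rather than $=r$, since equality fails when $\sigma_r(\mA)=0$.
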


\begin{proof}
Let $\mA \in \C^{m \times n \times p}$ have the T-SVD decomposition $\mA = \mU * \mS * \mV^H$, where $\mS$ is an $m \times n \times p$ tensor with diagonal frontal slices $\mS^{(k)}$ for $k = 1,2,\ldots, p$, and $\sigma_1(\mA) \geq \sigma_2(\mA) \geq \ldots \geq \sigma_{\min\{m,n\}\cdot p}(\mA) \geq 0$ are the T-singular values of $\mA$.

Given a unitarily invariant norm $\|\cdot\|$, we have
$$
\|\mA\| = \|\mU * \mS * \mV^H\| = \|\mS\|,
$$
where $\mU$ and $\mV$ are unitary tensors. 
This equality holds because unitarily invariant norms are functions of the singular values alone.

For a given $r$, consider the rank-$r$ tensor $\mE_r$ defined by truncating $\mS$ such that the first $r$ singular values are retained, and the remaining singular values are set to zero. 
Specifically, define $\mE_r = \mU * \mS_r * \mV^H$, where $\mS_r$ has diagonal slices containing the first $r$ singular values and zeros elsewhere.
Then $\rank(\mE_r)\le r$, and
$$
\mA - \mE_r = \mU * (\mS - \mS_r) * \mV^H.
$$
The norm of the error is then
$$
\|\mA - \mE_r\| = \|\mS - \mS_r\| = \| \diag(0, \ldots, 0, \sigma_{r+1}(\mA), \ldots, \sigma_{\min\{m,n\}\cdot p}(\mA)) \|.
$$

To prove optimality, we reduce to the matrix problem through $\bcirc$. Since $\rank(\mE) = \rank(\bcirc(\mE))$ for any tensor $\mE$, and the induced norm is determined by the same singular values, we obtain
$$
\min_{\rank(\mE) \leq r} \|\mA - \mE\| = \min_{\rank(\bcirc(\mE)) \leq r} \|\bcirc(\mA) - \bcirc(\mE)\|.
$$
By the classical matrix Schmidt-Mirsky theorem \citep[Theorem 2.5.3]{golub2013matrix}, for any matrix $\bm{B}$ with $\rank(\bm{B}) \leq r$, we have
$$
\sigma_{k}(\bcirc(\mA) - \bm{B}) \geq \sigma_{r+k}(\bcirc(\mA)), \quad k = 1, 2, \ldots
$$
Since the singular values of $\bcirc(\mA)$ coincide with the T-singular values of $\mA$, this yields
$$
\|\mA - \mE\| \geq \| \diag(0, \ldots, 0, \sigma_{r+1}(\mA), \ldots, \sigma_{\min\{m,n\}\cdot p}(\mA)) \|
$$
for any $\mE$ with $\rank(\mE) \leq r$. The truncated T-SVD achieves this lower bound.
\end{proof}

We measure phase dispersion by applying a gauge to the canonical T-phase vector.
Let $\Psi:\R^{np}\to\R$ be a symmetric gauge function and let $\mA\in\C^{n\times n\times p}$ be sectorial.
We define the associated \emph{T-phase gauge} by
\[
\measuredangle_{\Psi}\mA := \Psi\!\bigl(\phi(\mA)\bigr).
\]
For $\mA,\mB\in\mathbb{H}^{n\times n\times p}_{++}$, we will also use their (T-product) geometric mean $\mA\#\mB$, which may be characterized as the unique T-positive definite solution $\mX$ of the Riccati-type equation
\begin{equation}\label{eqRiccati}
\mX * \mA^{-1} * \mX = \mB.
\end{equation}

\begin{theorem}\label{thm:geometric_mean1}
For $\mA,\mB \in \mathbb{T}^{n\times n\times p}_{+}$, one has
\[
\phi(\mA \# \mB) \prec \tfrac12\phi(\mA) + \tfrac12\phi(\mB).
\]
\end{theorem}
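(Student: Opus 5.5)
The plan is to lift the statement to the block-circulant level and then invoke the known matrix inequality for Drury's geometric mean of accretive matrices. That matrix result (e.g.\ \citep{Chen2024phase1,chen2019phase}) says that for strictly accretive matrices $\bm A,\bm B$ one has
\[
\phi(\bm A\#\bm B)\prec \tfrac12\phi(\bm A)+\tfrac12\phi(\bm B).
\]
The tensor statement should then follow by transporting every object through $\bcirc$.

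\textbf{Step 1 (accretivity is preserved by $\bcirc$).} Since $W(\mA)=W(\bcirc(\mA))$, the condition $\mA\in\mathbb{T}^{n\times n\times p}_{+}$ holds exactly when $\bcirc(\mA)$ is strictly accretive; the same holds for $\mB$. The matrix geometric mean $\bcirc(\mA)\#\bcirc(\mB)$ is therefore well defined, and it is again strictly accretive.

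\textbf{Step 2 (phases agree).} By Definition~\ref{def:canonical_phases}, the canonical T-phase vector of a sectorial tensor $\mathscr{M}$ is, by construction, the canonical phase vector of the matrix $\bcirc(\mathscr{M})$, computed from any sectorial decomposition $\bcirc(\mathscr{M})=\bm T^H\bm D\bm T$. Canonical phases of a sectorial matrix do not depend on the chosen decomposition; I would cite this uniqueness from the matrix literature. Hence $\phi(\mathscr{M})=\phi(\bcirc(\mathscr{M}))$ for every sectorial $\mathscr{M}$. Applying this with $\mathscr{M}=\mA,\mB,\mA\#\mB$ requires $\mA\#\mB$ to be sectorial, which follows from Step 1 and the identity in Step 3.

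\textbf{Step 3 (combine).} Proposition~\ref{PropBcircGeometricMean} gives $\bcirc(\mA\#\mB)=\bcirc(\mA)\#\bcirc(\mB)$. Combining this with Step 2,
\[
\phi(\mA\#\mB)=\phi\bigl(\bcirc(\mA)\#\bcirc(\mB)\bigr)\prec\tfrac12\phi(\bcirc(\mA))+\tfrac12\phi(\bcirc(\mB))=\tfrac12\phi(\mA)+\tfrac12\phi(\mB),
\]
where the majorization is the matrix result applied to the $np\times np$ strictly accretive matrices $\bcirc(\mA)$ and $\bcirc(\mB)$.

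\textbf{Main obstacle.} The delicate point is making sure the matrix theorem being cited applies to exactly this notion of geometric mean, namely Drury's integral definition \eqref{eq:extend_geometric_mean} for accretive (not only positive definite) matrices. Proposition~\ref{PropBcircGeometricMean} was proved for precisely this integral form, so that part matches. If a self-contained argument is wanted for the matrix majorization itself, I would do the following:
\begin{itemize}
\item Use Proposition~\ref{prop:congruence_geomean}, in its matrix form, to simultaneously congruence-normalize $\bcirc(\mA)$ to a diagonal unitary matrix $\bm D$, so that the pair becomes $(\bm D,\bm T^H\bm E\bm T)$ with $\bm E$ diagonal unitary.
\item Apply the phase-of-product majorization $\angle\lambda(\bm A^{-1}\bm B)\prec \phi(\bm B)-\phi(\bm A)$, together with the closed-form identity $\bm A\#\bm B=\bm A^{1/2}(\bm A^{-1/2}\bm B\bm A^{-1/2})^{1/2}\bm A^{1/2}$.
\end{itemize}
This route is substantially harder, so I would rely on the published matrix result.
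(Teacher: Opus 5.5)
Your proposal is correct and follows essentially the same route as the paper: pass to $\bcirc(\mA)$ and $\bcirc(\mB)$, use Proposition~\ref{PropBcircGeometricMean} and $\phi(\mZ)=\phi(\bcirc(\mZ))$, then invoke the matrix majorization for strictly accretive matrices (the paper cites \citep[Theorem~1]{zhao2022low} for it). You also note explicitly that $\mA\#\mB$ is accretive, hence sectorial, so that $\phi(\mA\#\mB)$ is defined, which the paper leaves implicit; your optional self-contained sketch is not needed, and as written it would need correcting, since the product bound should be $\phi(\bm B)+\phi(\bm A^{-1})$, with $\phi(\bm A^{-1})$ the negated, reversed $\phi(\bm A)$, not $\phi(\bm B)-\phi(\bm A)$.
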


\begin{proof}
Let $\bm M:=\bcirc(\mA)$ and $\bm N:=\bcirc(\mB)$. 
Since $\mA,\mB\in\mathbb{T}^{n\times n\times p}_{+}$, we have $\bm M,\bm N\in\mathcal{C}(-\tfrac{\pi}{2},\tfrac{\pi}{2})$.

By Proposition~\ref{PropBcircGeometricMean},
\[
\bcirc(\mA\#\mB) = \bcirc(\mA)\#\bcirc(\mB) = \bm M\#\bm N.
\]

By \citep[Theorem~1]{zhao2022low}, for sectorial matrices $\bm M,\bm N\in\mathcal{C}(-\tfrac{\pi}{2},\tfrac{\pi}{2})$,
\[
\phi(\bm M\#\bm N)\prec \tfrac12\phi(\bm M)+\tfrac12\phi(\bm N).
\]

By Definition~\ref{def:canonical_phases}, $\phi(\mZ)=\phi(\bcirc(\mZ))$ for any sectorial tensor $\mZ$.
Hence
\[
\phi(\mA\#\mB)=\phi(\bcirc(\mA\#\mB))=\phi(\bm M\#\bm N)\prec \tfrac12\phi(\bm M)+\tfrac12\phi(\bm N)
=\tfrac12\phi(\mA)+\tfrac12\phi(\mB).
\]
\end{proof}

\begin{corollary}
Let $\mA,\mB \in \mathbb{T}^{n\times n\times p}_{+}$. Then for any symmetric gauge function $\Psi:\R^{np}\rightarrow\R$,
\begin{equation}
    \measuredangle_{\Psi}(\mA \# \mB) \leq \frac{1}{2} \measuredangle_{\Psi} \mA + \frac{1}{2} \measuredangle_{\Psi} \mB.
\end{equation}
\end{corollary}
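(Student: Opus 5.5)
The plan is to combine Theorem~\ref{thm:geometric_mean1} with the standard fact that symmetric gauge functions are monotone with respect to majorization. The argument has three steps.

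\textbf{Step 1: the majorization.} Set $\bm x:=\phi(\mA\#\mB)$ and $\bm y:=\tfrac12\phi(\mA)+\tfrac12\phi(\mB)$, both in $\R^{np}$. By Theorem~\ref{thm:geometric_mean1}, $\bm x\prec\bm y$. Note that $\mA\#\mB$ is sectorial, since its $\bcirc$ is the matrix geometric mean $\bm M\#\bm N$ (Proposition~\ref{PropBcircGeometricMean}) of two strictly accretive matrices. Hence its canonical T-phase vector is well defined and $\measuredangle_\Psi(\mA\#\mB)=\Psi(\bm x)$.

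\textbf{Step 2: from majorization to a gauge inequality.} I would use the Rado characterization: $\bm x\prec\bm y$ if and only if $\bm x=\sum_\ell t_\ell \bm P_\ell \bm y$ for some permutation matrices $\bm P_\ell$ and convex weights $t_\ell\ge0$ with $\sum_\ell t_\ell=1$ \citep{marshall1979inequalities}. Since $\Psi$ is a norm, hence convex, and is permutation invariant,
\[
\Psi(\bm x)\le\sum_\ell t_\ell\Psi(\bm P_\ell\bm y)=\Psi(\bm y).
\]
Only strong majorization is available here, and it suffices. One could instead argue through Ky Fan dominance applied to $|\bm x|$ and $|\bm y|$, but the convex-combination argument avoids sign issues, because phases may be negative.

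\textbf{Step 3: triangle inequality.} Since $\Psi$ is a norm,
\[
\Psi(\bm y)\le\tfrac12\Psi(\phi(\mA))+\tfrac12\Psi(\phi(\mB))=\tfrac12\measuredangle_\Psi\mA+\tfrac12\measuredangle_\Psi\mB,
\]
and chaining this with Step 2 gives the claim.

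The only point needing care is Step 2. The vector $\bm y$ need not be sorted, so I would first confirm that Theorem~\ref{thm:geometric_mean1} is meant with the vectors compared after rearrangement in decreasing order. That is the standard convention in \citep{marshall1979inequalities}, and Rado's theorem applies regardless of ordering. Otherwise the argument is routine.
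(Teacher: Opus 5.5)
Your proposal is correct and follows the paper's proof: majorization from Theorem~\ref{thm:geometric_mean1}, then monotonicity of $\Psi$ under majorization, then convexity of $\Psi$. The one difference is the middle step. You justify it with Rado's convex-hull-of-permutations characterization, while the paper appeals to Fan's dominance theorem. Your version is slightly cleaner here, because Fan dominance is naturally phrased for absolute values and the phases may be negative.

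Your worry that $\bm y$ might be unsorted is unnecessary. Both $\phi(\mA)$ and $\phi(\mB)$ are nonincreasing by Definition~\ref{def:canonical_phases}, so their average is nonincreasing as well.
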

\begin{proof}
Fan's dominance theorem \citep[II, Theorem~3.17]{stewart1990matrix} implies that $\bm{y}\prec\bm{x}$ entails $\Psi(\bm{y})\le \Psi(\bm{x})$ for every symmetric gauge function $\Psi$.
Combining this characterization with Theorem~\ref{thm:geometric_mean1} and using the convexity of $\Psi$ yields
$$
\measuredangle_{\Psi}(\mA \# \mB) = \Psi(\phi(\mA \# \mB)) \leq
\Psi\left(\frac{\phi(\mA)+\phi(\mB)}{2}\right) \leq \frac{1}{2} \Psi(\phi(\mA))+
\frac{1}{2}\Psi(\phi(\mB)).
$$
\end{proof}

\begin{corollary}
Given $\mA\in\mathbb{T}^{n\times n\times p}_{+}$, it holds
$$
2\phi(\mA^{\frac{1}{2}}) \prec \phi(\mA).
$$
\end{corollary}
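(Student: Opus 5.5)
The plan is to specialize Theorem~\ref{thm:geometric_mean1} to $\mB=\mI_{np}$ and then rewrite the geometric mean of $\mA$ and $\mI_{np}$ as the square root $\mA^{\frac12}$. Concretely, I would aim for three facts: $\mI_{np}\in\mathbb{T}^{n\times n\times p}_{+}$, $\phi(\mI_{np})=0$, and $\mA\#\mI_{np}=\mA^{\frac12}$.

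\textbf{Step 1: the identity tensor.} We have $\Re\langle\mX,\mI_{np}*\mX\rangle_F=\|\mX\|_F^2>0$ for every $\mX\neq0$, so $\mI_{np}\in\mathbb{T}^{n\times n\times p}_{+}$. Moreover $\bcirc(\mI_{np})=\bm I_{np}=\bm I^H\bm I\bm I$ is a sectorial decomposition with $\bm D=\bm I$. Hence $\phi(\mI_{np})=0$.

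\textbf{Step 2: identify the mean.} I would use the closed-form expression for the geometric mean with the roles written as $(\mI_{np},\mA)$. Since $\mI_{np}^{\pm\frac12}=\mI_{np}$ (the principal root of $\bm I$ in each Fourier block is $\bm I$), it gives
\[
\mI_{np}\#\mA=\mI_{np}*\bigl(\mI_{np}*\mA*\mI_{np}\bigr)^{\frac12}*\mI_{np}=\mA^{\frac12}.
\]
By the symmetry of \eqref{eq:extend_geometric_mean}, this also equals $\mA\#\mI_{np}$.

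As a cross-check, $\mY=\mA^{\frac12}$ lies in $\mathbb{T}^{n\times n\times p}_{+}$ as the principal root, and it satisfies $\mY*\mI_{np}^{-1}*\mY=\mA$. So the uniqueness theorem gives $\mY=\mI_{np}\#\mA$ directly, without appealing to the closed form.

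\textbf{Step 3: conclude.} Theorem~\ref{thm:geometric_mean1} applied to the pair $(\mA,\mI_{np})$ gives
\[
\phi(\mA^{\frac12})=\phi(\mA\#\mI_{np})\prec\tfrac12\phi(\mA)+\tfrac12\phi(\mI_{np})=\tfrac12\phi(\mA).
\]
Multiplying both sides by $2$ preserves majorization, so $2\phi(\mA^{\frac12})\prec\phi(\mA)$.

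\textbf{Main obstacle.} The only delicate point is that $\mA^{\frac12}$ must be the accretive principal root, both so that the uniqueness theorem applies and so that $\phi(\mA^{\frac12})$ is defined. This is supplied by the cited result \citep[Theorem 1.29]{higham2008functions}, which gives $\mA^{\frac12}\in\mathbb{T}^{n\times n\times p}_{+}$.
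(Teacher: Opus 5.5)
Your proposal is correct and follows essentially the same route as the paper: specialize Theorem~\ref{thm:geometric_mean1} to $\mB=\mI_{np}$, use $\phi(\mI_{np})=0$, and identify $\mA\#\mI_{np}=\mA^{\frac12}$. The only differences are cosmetic. You read off $\mA^{\frac12}$ from the closed form with $\mI_{np}$ in the outer position, which avoids the paper's step $(\mA^{-1})^{\frac12}=\mA^{-\frac12}$. Your uniqueness-theorem cross-check is an equally valid alternative.
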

\begin{proof}
The identity tensor $\mI_{np}$ belongs to $\mathbb{T}^{n\times n\times p}_{+}$ and satisfies $\phi(\mI_{np}) = \bm{0}$.
We verify that $\mA \# \mI_{np} = \mA^{\frac{1}{2}}$. By the definition of the geometric mean,
\begin{align*}
\mA \# \mI_{np} 
&= \mA^{\frac{1}{2}} * \left(\mA^{-\frac{1}{2}} * \mI_{np} * \mA^{-\frac{1}{2}}\right)^{\frac{1}{2}} * \mA^{\frac{1}{2}} \\
&= \mA^{\frac{1}{2}} * \left(\mA^{-1}\right)^{\frac{1}{2}} * \mA^{\frac{1}{2}} \\
&= \mA^{\frac{1}{2}}.
\end{align*}
By Theorem~\ref{thm:geometric_mean1},
$$
\phi(\mA^{\frac{1}{2}}) = \phi(\mA \# \mI_{np}) \prec \frac{1}{2}\phi(\mA) + \frac{1}{2}\phi(\mI_{np}) = \frac{1}{2}\phi(\mA).
$$
\end{proof}

\begin{corollary}
Let $\mA,\mB\in\mathbb{T}^{n\times n\times p}_{+}$ and consider the family
\begin{equation}\label{eq:set1}
    \left\{ \phi\left( (\mX^{H} *\mA*\mX)\# \mB \right) \Big| \mX \in \C^{n\times n\times p} \text{ is nonsingular} \right\}.
\end{equation}
With respect to the majorization order, this set has a maximal element, namely $(\phi(\mA) + \phi(\mB))/2$.
\end{corollary}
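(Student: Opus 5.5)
Two things need to be shown. First, every element of the set \eqref{eq:set1} is majorized by $(\phi(\mA)+\phi(\mB))/2$. Second, this bound is actually attained by some nonsingular $\mX$, so it is the maximum in the majorization order.

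\emph{Upper bound.} Fix a nonsingular $\mX$ and put $\widetilde{\mA}:=\mX^H*\mA*\mX$. Congruence by a nonsingular tensor preserves strict accretivity: with $\mY=\mX*\mZ$ we have $\Re\langle\mZ,\widetilde{\mA}*\mZ\rangle_F=\Re\langle\mY,\mA*\mY\rangle_F>0$. Hence $\widetilde{\mA}\in\mathbb{T}^{n\times n\times p}_{+}$. The canonical T-phases are congruence invariants. If $\bcirc(\mA)=\bm T^H\bm D\bm T$, then $\bcirc(\widetilde{\mA})=(\bm T\bcirc(\mX))^H\bm D(\bm T\bcirc(\mX))$. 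The canonical phases of a sectorial matrix are independent of the chosen sectorial decomposition, since they are the phases of the eigenvalues of $\bm M^{-1}\bm M^H$ (or equivalently a standard uniqueness fact). Therefore $\phi(\widetilde{\mA})=\phi(\mA)$. Theorem~\ref{thm:geometric_mean1} applied to $(\widetilde{\mA},\mB)$ then gives
\[
\phi(\widetilde{\mA}\#\mB)\prec\tfrac12\phi(\widetilde{\mA})+\tfrac12\phi(\mB)=\tfrac12\phi(\mA)+\tfrac12\phi(\mB).
\]

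\emph{Attainment.} This is the main step. The idea is to choose $\mX$ so that $\mX^H*\mA*\mX$ and $\mB$ are simultaneously congruent to diagonal unitary tensors with the phases aligned in matching order.
\begin{itemize}
\item By Theorem~\ref{thm:sectorial_decomposition}, write $\mA=\mT_A^H*\mD_A*\mT_A$ and $\mB=\mT_B^H*\mD_B*\mT_B$.
\item The Fourier-domain diagonals of $\mD_A,\mD_B$ carry the phases in some order, and we need the same ordering for both. Sorting the $np$ entries of $\diag(\bm D_A)$ and $\diag(\bm D_B)$ decreasingly uses a permutation of all $np$ indices that may mix Fourier blocks. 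Such a permutation is not block-diagonal in general, so it is not of the form $\bcirc(\mP)$ for a tensor $\mP$. This is the delicate point.
\item I would avoid it by permuting only within each Fourier slice. For each block $k$, take the $n$ phases of $\mA$ and of $\mB$ in block $k$, both sorted decreasingly. Choose a permutation tensor $\mP$ (block-diagonal in the Fourier domain with permutation blocks; such $\mP$ are legitimate tensors since $\bcirc$ images are exactly the Fourier block-diagonal matrices) so that $\mP^H*\mD_A*\mP$ has, in every block, the same ordering as $\mD_B$.
\item Set $\mX:=\mT_A^{-1}*\mP*\mT_B$. Then $\mX^H*\mA*\mX=\mT_B^H*\widetilde{\mD}_A*\mT_B$ with $\widetilde{\mD}_A:=\mP^H*\mD_A*\mP$.
\item Proposition~\ref{prop:congruence_geomean} gives $(\mX^H*\mA*\mX)\#\mB=\mT_B^H*(\widetilde{\mD}_A\#\mD_B)*\mT_B$.
\item For commuting diagonal unitaries in $\mathbb{T}_+$, the geometric mean is the entrywise principal square root of the product. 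This follows from the closed form or the integral formula, and all phases lie in $(-\pi/2,\pi/2)$, so no branch issues arise. Its phases are therefore the entrywise averages $(\theta_i+\psi_i)/2$ within each block.
\end{itemize}

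It remains to see that this vector, after global sorting, equals $\tfrac12(\phi(\mA)+\phi(\mB))$ with both sorted globally. This holds only when the blockwise pairing coincides with the global sorted pairing, which fails in general. I therefore expect the honest resolution to need more than blockwise permutations. If the definition of $\phi$ via $\bcirc$ allows arbitrary nonsingular matrices, the maximal element holds at the matrix level. In that case I would try to realize the matrix-level optimal congruence through a tensor $\mX$; failing that, I would state attainment for the within-slice-aligned pairing and note the needed hypothesis. This attainment step is the main obstacle; the upper bound is routine.
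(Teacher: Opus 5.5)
Your upper bound is correct and matches the paper's argument: Theorem~\ref{thm:geometric_mean1} plus congruence invariance of $\phi$. (A small slip: $\bm M^{-1}\bm M^H$ has eigenvalues $e^{-2j\phi_k}$, so it determines the phases only together with the sector condition. Citing uniqueness of canonical phases is cleaner.)

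\textbf{The gap is attainment, and it cannot be closed.} Your suspicion is right in a stronger form than you state: $\tfrac12(\phi(\mA)+\phi(\mB))$ is in general not in the set at all. Every tensor $\mX$ satisfies $\bcirc(\mX)=(\bm F_p^H\otimes\bm I_n)\diag(\bm X_1,\ldots,\bm X_p)(\bm F_p\otimes\bm I_n)$. So by Proposition~\ref{PropBcircGeometricMean}, $\bcirc((\mX^H*\mA*\mX)\#\mB)$ is unitarily similar to $\diag\bigl((\bm X_k^H\bm A_k\bm X_k)\#\bm B_k\bigr)_{k=1}^{p}$, and its canonical phases are the union of the slice-wise ones. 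No tensor congruence can pair a phase of $\mA$ in one Fourier slice with a phase of $\mB$ in another. Hence your plan of realizing the matrix-level optimal congruence by a tensor $\mX$ must fail.

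Here is a concrete counterexample. Let $n=1$, $p=2$, and let $\mA,\mB$ have Fourier slices $(e^{0.4j},1)$ and $(1,e^{0.4j})$. Both lie in $\mathbb{T}^{1\times1\times2}_{+}$ (normal, spectrum in the open right half-plane), and $\phi(\mA)=\phi(\mB)=(0.4,0)$. So the corollary predicts the maximal element $(0.4,0)$. But a nonsingular $\mX$ has Fourier slices $x_1,x_2\neq0$, and $(\mX^H*\mA*\mX)\#\mB$ then has Fourier slices $|x_1|e^{0.2j}$ and $|x_2|e^{0.2j}$. The whole set is therefore $\{(0.2,0.2)\}$. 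A real instance also exists: take $n=1$, $p=3$, Fourier slices $(1,e^{0.4j},e^{-0.4j})$ and $(1,e^{-0.4j},e^{0.4j})$. The set is $\{(0,0,0)\}$, while $\tfrac12(\phi(\mA)+\phi(\mB))=(0.4,0,-0.4)$.

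The paper's proof skips exactly this point. With $\mX=\mT_{\mA}^{-1}*\mT_{\mB}$ it asserts $\phi(\mD_{\mA}\#\mD_{\mB})=\tfrac12\phi(\mD_{\mA})+\tfrac12\phi(\mD_{\mB})$. The left side is the sorted vector of averages of the Fourier-domain diagonal phases paired by position. That equals the sum of the separately sorted vectors only if the pairing is co-monotone, and in the example it is anti-monotone for every choice of decompositions.

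\textbf{What is true is your fallback.} With $\mX=\mT_{\mA}^{-1}*\mathscr{P}*\mT_{\mB}$, where $\mathscr{P}$ is the slice-wise aligning permutation, you attain $w$: the sorted concatenation of the vectors $\tfrac12\bigl(\phi(\bm A_k)+\phi(\bm B_k)\bigr)$, $k=1,\ldots,p$. This $w$ is the greatest element of the set. Indeed, the matrix inequality applied in each slice gives $\phi\bigl((\bm X_k^H\bm A_k\bm X_k)\#\bm B_k\bigr)\prec\tfrac12\bigl(\phi(\bm A_k)+\phi(\bm B_k)\bigr)$, and majorization is preserved under concatenation (use a block-diagonal doubly stochastic matrix). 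One always has $w\prec\tfrac12(\phi(\mA)+\phi(\mB))$. Equality holds exactly when the slice-wise sorted pairing is co-monotone across slices (e.g.\ $p=1$), and that is the extra hypothesis the corollary needs.
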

\begin{proof}
For any nonsingular $\mX$, Theorem~\ref{thm:geometric_mean1} implies
$$
\phi\left( (\mX^{H} *\mA*\mX)\# \mB \right) \prec \frac{1}{2} \phi (\mX^{H} *\mA*\mX) + \frac{1}{2} \phi(\mB)
= \frac{1}{2} \phi(\mA) + \frac{1}{2} \phi(\mB),
$$
where we used the congruence invariance $\phi(\mX^H * \mA * \mX)=\phi(\mA)$ for sectorial tensors.
so $(\phi(\mA)+\phi(\mB))/2$ is an upper bound in the majorization sense.
To see that this bound can be attained, let $\mA = \mT^{H}_{\mA} * \mD_{\mA} * \mT_{\mA}$ and $\mB = \mT^{H}_{\mB} * \mD_{\mB} * \mT_{\mB}$ be sectorial decompositions of $\mA$ and $\mB$, respectively.
It follows that

\begin{align*}
\bcirc(\mA)&=(\bm{F}_p^H \otimes \bm{I}_n)\cdot \diag(\bm{A}_1,\bm{A}_2,\ldots,\bm{A}_p)\cdot (\bm{F}_p \otimes \bm{I}_n)\\
&=(\bm{F}_p^H \otimes \bm{I}_n)\cdot \diag(\bm{T}^{H}_{\bm{A}_1},\ldots,\bm{T^{H}}_{\bm{A}_p})\cdot \diag(\bm{D}_{\bm{A}_1},\ldots,\bm{D}_{\bm{A}_p})\cdot \diag(\bm{T}_{\bm{A}_1},\ldots,\bm{T}_{\bm{A}_p})\cdot (\bm{F}_p \otimes \bm{I}_n),\\
\bcirc(\mB)&=(\bm{F}_p^H \otimes \bm{I}_n)\cdot \diag(\bm{B}_1,\bm{B}_2,\ldots,\bm{B}_p)\cdot (\bm{F}_p \otimes \bm{I}_n)\\
&=(\bm{F}_p^H \otimes \bm{I}_n)\cdot \diag(\bm{T}^{H}_{\bm{B}_1},\ldots,\bm{T^{H}}_{\bm{B}_p})\cdot \diag(\bm{D}_{\bm{B}_1},\ldots,\bm{D}_{\bm{B}_p})\cdot \diag(\bm{T}_{\bm{B}_1},\ldots,\bm{T}_{\bm{B}_p})\cdot (\bm{F}_p \otimes \bm{I}_n),\\
\end{align*}
and
\begin{align*}
\bcirc(\mT_{\mA})&=(\bm{F}_p^H \otimes \bm{I}_n)\cdot \diag(\bm{T}_{\bm{A}_1},\bm{T}_{\bm{A}_2},\ldots,\bm{T}_{\bm{A}_p})\cdot (\bm{F}_p \otimes \bm{I}_n),\\
\bcirc(\mD_{\mA})&=(\bm{F}_p^H \otimes \bm{I}_n)\cdot \diag(\bm{D}_{\bm{A}_1},\bm{D}_{\bm{A}_2},\ldots,\bm{D}_{\bm{A}_p})\cdot (\bm{F}_p \otimes \bm{I}_n),\\
\bcirc(\mT_{\mB})&=(\bm{F}_p^H \otimes \bm{I}_n)\cdot \diag(\bm{T}_{\bm{B}_1},\bm{T}_{\bm{B}_2},\ldots,\bm{T}_{\bm{B}_p})\cdot (\bm{F}_p \otimes \bm{I}_n),\\
\bcirc(\mD_{\mB})&=(\bm{F}_p^H \otimes \bm{I}_n)\cdot \diag(\bm{D}_{\bm{B}_1},\bm{D}_{\bm{B}_2},\ldots,\bm{D}_{\bm{B}_p})\cdot (\bm{F}_p \otimes \bm{I}_n),\\
\end{align*}
where each $\bm{A}_{i} = \bm{T}^{H}_{\bm{A}_{i}}\bm{D}_{\bm{A}_{i}}\bm{T}_{\bm{A}_{i}}$ and $\bm{B}_{i} = \bm{T}^{H}_{\bm{B}_{i}}\bm{D}_{\bm{B}_{i}}\bm{T}_{\bm{B}_{i}}$ are matrix sectorial decompositions for $i=1,2,\ldots,p$.
Let $\mX = \mT^{-1}_{\mA} * \mT_{\mB}$.
Then
\begin{align*}
\phi\left( (\mX^{H} * \mA * \mX) \# \mB \right)
=&\phi\left( (\mT_{\mB}^{H} * \mD_{\mA} * \mT_{\mB}) \# (\mT_{\mB}^{H} * \mD_{\mB} * \mT_{\mB}) \right) \\
=&\phi\left( \mT_{\mB}^{H} * ( \mD_{\mA} \# \mD_{\mB}) * \mT_{\mB} \right)\qquad\text{(Proposition~\ref{prop:congruence_geomean})}\\
=&\phi\left( \mD_{\mA} \# \mD_{\mB} \right)\\
=&\frac{1}{2} \phi(\mD_{\mA}) + \frac{1}{2}\phi(\mD_{\mB})\\
=&\frac{1}{2} \phi(\mA) + \frac{1}{2}\phi(\mB).
\end{align*}
\end{proof}

\section{Low T-phase-rank approximation via geometric means}

We now formalize a phase-based notion of rank for sectorial tensors and the associated approximation task.
This is the T-product analogue of the matrix phase-rank concept of~\citep{zhao2022low}: the rank is defined by counting nonzero entries in the canonical phase vector (rather than nonzero singular values).
\begin{definition}
    Let $\mA\in\C^{n\times n\times p}$ be sectorial and let $\phi(\mA)=(\phi_{1}(\mA),\ldots,\phi_{np}(\mA))$ denote the canonical phases in Definition~\ref{def:canonical_phases}.
    The \emph{T-phase rank} is defined by
    \[
        \Tprank(\mA)
        := \bigl|\{\, i\in\{1,\ldots,np\} : \phi_i(\mA)\neq 0 \,\}\bigr|.
    \]
    In particular, $0\le \Tprank(\mA)\le np$, and $\Tprank(\mA)=np$ whenever $0\notin[\underline{\phi}(\mA),\overline{\phi}(\mA)]$.
\end{definition}

In this section, when we state objective-value bounds we work in the positive-imaginary regime:
\[
\mA\in\mathcal{C}[0,\pi),\qquad \mE^{-1}*\mA*\mE^{-1}\in\mathcal{C}[0,\pi),
\]
and all phase vectors are understood as canonical vectors in nonincreasing order.

A convenient bridge between the classical low-rank model and our phase-based formulation is obtained by introducing an auxiliary variable and the arithmetic mean $\nabla$ in~\eqref{eq:arithmetic mean}.
Let $\mX:=\mA-\mE$, so that $\mE=\mA-\mX$.
Noting that $\mA-\mX = 2\bigl((-\mX)\nabla \mA\bigr)$, Problem~\ref{pm:low_rank_approx} can be written as
\begin{equation}\label{eq:low_rank_approximation_arithmetic_mean_1}
    \widehat{\mX}
    = \argmin_{\mX}\Bigl\{ \Phi(\sigma(\mX)) \; \Big|\; \rank\!\bigl(2((-\mX)\nabla \mA)\bigr)\le r\Bigr\}.
\end{equation}
Eliminating $\mE$ yields the equivalent single-variable form
\begin{equation}\label{eq:low_rank_approximation_arithmetic_mean_2}
    \widehat{\mX}
    = \argmin_{\mX}\Bigl\{ \Phi(\sigma(\mX)) \;\Big|\; \rank\bigl((-\mX)\nabla \mA\bigr)\le r \Bigr\}.
\end{equation}
Any optimizer $\widehat{\mX}$ recovers an optimal rank-$r$ approximation via $\widehat{\mE}=\mA-\widehat{\mX}$.

The reformulation above highlights how singular-value majorization enters the classical low-rank approximation problem.
On the phase side, Theorem~\ref{thm:geometric_mean1} plays an analogous role: it provides a majorization principle for canonical phase vectors under the geometric mean.
This motivates a phase-based approximation model in which error is measured through canonical phases and geometric means rather than singular values and arithmetic means.
Accordingly, given a symmetric gauge function $\Psi:\R^{np}\to\R$, a sectorial tensor $\mA\in\C^{n\times n\times p}$, and $0\le r\le np$, we consider
\begin{equation}\label{eq:low_Tprank_approximation_1}
    \widehat{\mX} = \arg\min_{\mX} \left\{ \measuredangle_{\Psi}\mX \;\Big|\; \mX^{-1}, \mA \in \mathbb{T}^{n\times n\times p}_{+},\ \mX^{-1}\#\mA \in \mathcal{C}[0,\pi),\ \Tprank(\mX^{-1} \# \mA) \leq r \right\}.
\end{equation}
Introducing the auxiliary variable $\mE := \mX^{-1} \# \mA$ gives $\mX = \mE^{-1} * \mA * \mE^{-1}$.
Motivated by this identity, we analyze the geometric-mean constrained formulation below.

\begin{problem}\label{pm:low_Tprank_approximation_1}
    Given a symmetric gauge function $\Psi:\R^{np}\to\R$, a sectorial tensor $\mA\in\C^{n\times n\times p}$,
    and an integer $0\le r\le np$, determine
    \begin{align*}
        \min_{\mE\in\C^{n\times n\times p}}\quad
        & \measuredangle_{\Psi}\bigl(\mE^{-1} * \mA * \mE^{-1}\bigr)\\
        \text{s.t.}\quad
        & \mE \ \text{is sectorial},\qquad \mE^{-1} * \mA * \mE^{-1}\ \text{is sectorial},\qquad \Tprank(\mE)\le r.
    \end{align*}
\end{problem}
The sectorial constraint on $\mE$ guarantees $0\notin\sigma(\bcirc(\mE))$; in particular, $\mE$ is invertible and the T-product inverse $\mE^{-1}$ is well defined.

Similar to \eqref{eq:T_SVD_r_approximation_1}, consider the sectorial decomposition \eqref{eq:sectorial_decomposition} for a sectorial tensor $\mA\in\C^{n\times n\times p}$.
By Lemma~\ref{LemmaFourierBlockDiag}, there exist slice-wise sectorial decompositions $\bm{A}_i = \bm{T}_i^{H}\bm{D}_i\bm{T}_i$ for $i=1,\ldots,p$, where each $\bm{T}_i$ is nonsingular and $\bm{D}_i$ is diagonal unitary.
Collecting these, we write
\[
\bcirc(\mA)
=(\bm{F}_p^H \otimes \bm{I}_n)\,
\diag(\bm{T}_{1}^{H},\ldots,\bm{T}_{p}^{H})\,
\diag(\bm{D}_{1},\ldots,\bm{D}_{p})\,
\diag(\bm{T}_{1},\ldots,\bm{T}_{p})\,
(\bm{F}_p \otimes \bm{I}_n).
\]
After a suitable permutation $\bm{P}$ of the diagonal entries to achieve global ordering, we have
\[
\bm{P}^T\diag(\bm{D}_{1},\ldots,\bm{D}_{p})\bm{P}
=\operatorname{diag}\!\big(e^{i\phi_{1}(\mA)},\ldots,e^{i\phi_{np}(\mA)}\big),
\]
where $\phi_1(\mA)\ge\cdots\ge\phi_{np}(\mA)$ are the canonical phases.
Let $\bm{t}_{i}$ denote the $i$-th column of the permuted matrix $\bm{P}^T\diag(\bm{T}_{1},\ldots,\bm{T}_{p})$.

Let $\mA\in\mathbb{C}^{n\times n\times p}$ be sectorial with a sectorial decomposition
\[
\bcirc(\mA)=\bm{T}^{*}\bm{D}\bm{T},
\qquad
\bm{D}=\operatorname{diag}\!\big(e^{i\phi_1(\mA)},\ldots,e^{i\phi_{np}(\mA)}\big),
\]
where the phases are ordered as $\phi_1(\mA)\ge\cdots\ge\phi_{np}(\mA)$.
For $0\le r\le np$, we define the \emph{half-phase diagonal matrix}
\[
\bm{D}_{r/2}(\mA):=\operatorname{diag}\!\big(e^{i\phi_1(\mA)/2},\ldots,e^{i\phi_r(\mA)/2},1,\ldots,1\big)\in\mathbb{C}^{np\times np},
\]
and the corresponding \emph{half-phase truncation tensor}
\[
\mE_{r/2}:=\bcirc^{-1}\!\big(\bm{T}^{*}\bm{D}_{r/2}(\mA)\bm{T}\big).
\]
The set of all $r$-half truncations is then given by
\begin{equation}
\mathcal{S}_{Tp}(\mA,r)
:=\Big\{\mE_{r/2} \;\Big|\; \bcirc(\mA)=\bm{T}^{*}\bm{D}\bm{T} \text{ is a sectorial decomposition with ordered phases}\Big\}.
\end{equation}

We will frequently use the Ky-Fan family of symmetric gauge functions~\citep{fan1955some}. For $x\in\mathbb{R}^{np}$ and $k=1,2,\ldots,np$, define
\begin{equation}
\Gamma_k(x) := \text{sum of the largest } k \text{ terms in } \{|x_1|, |x_2|, \dots, |x_{np}|\},
\quad k=1,2,\ldots,np,
\end{equation}
for $x\in\mathbb{R}^{np}$.
We now collect several basic properties of sectorial tensors for later use.

\begin{lemma}
Let $\mA, \mB \in \mathcal{C}[\alpha, \beta]$ with $0 < \beta - \alpha < \pi$. Then the following holds.
\begin{itemize}
    \item[(a)] $\mA + \mB \in \mathcal{C}[\alpha, \beta]$.
    \item[(b)] If, in addition, $\angle\lambda(\mA * \mB)$ takes values in
    \[
        \big(\gamma(\mA)+\gamma(\mB)-\pi,\ \gamma(\mA)+\gamma(\mB)+\pi\big),
    \]
    where $\gamma(\mX):=(\overline{\phi}(\mX)+\underline{\phi}(\mX))/2$, then
    \[
        \angle\lambda(\mA * \mB) \prec \phi(\mA) + \phi(\mB).
    \]
\end{itemize}
\end{lemma}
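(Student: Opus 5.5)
The plan is to reduce both parts to the corresponding matrix statements through the block-circulant embedding, as in the proof of Theorem~\ref{thm:geometric_mean1}. The basic facts are the following. By Definition~\ref{def:canonical_phases}, $\phi(\mZ)=\phi(\bcirc(\mZ))$ for every sectorial tensor $\mZ$. The map $\bcirc$ is linear and multiplicative: $\bcirc(\mA+\mB)=\bcirc(\mA)+\bcirc(\mB)$ and $\bcirc(\mA*\mB)=\bcirc(\mA)\bcirc(\mB)$. Moreover, $\spec(\mZ)=\spec(\bcirc(\mZ))$ by the definition of T-eigenvalues. Writing $\bm M:=\bcirc(\mA)$ and $\bm N:=\bcirc(\mB)$, the hypothesis $\mA,\mB\in\mathcal{C}[\alpha,\beta]$ says exactly that the phases of the sectorial matrices $\bm M,\bm N$ lie in $[\alpha,\beta]$.

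For (a), I would argue through numerical ranges. For a sectorial matrix, the closed angular hull of $W(\bm M)$ is the cone $\{re^{j\theta}: r\ge0,\ \theta\in[\underline{\phi}(\bm M),\overline{\phi}(\bm M)]\}$. This holds because $W(\bm T^H\bm D\bm T)$ consists of positive combinations of the $e^{j\phi_k}$, and the extreme phases are attained. Hence $W(\bm M)$ and $W(\bm N)$ lie in the closed convex cone $K$ of directions $[\alpha,\beta]$. Since $\beta-\alpha<\pi$, the cone $K$ is pointed.

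Then, for any unit vector $\bm x$, we have $\bm x^*(\bm M+\bm N)\bm x\in K+K=K$. This quantity is nonzero, because both summands lie in $K\setminus\{0\}$ and $K$ is pointed. Therefore $\bm M+\bm N$ is sectorial, and the angular hull of its numerical range lies in $K$. So its canonical phases lie in $[\alpha,\beta]$, that is, $\mA+\mB\in\mathcal{C}[\alpha,\beta]$. Equivalently, one may simply cite the matrix statement \citep[Lemma]{zhao2022low}/\citep{wang2020phases}, since $\bcirc(\mA+\mB)=\bm M+\bm N$.

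For (b), the key step is to invoke the known matrix result of \citep{wang2020phases} (also used in \citep{zhao2022low}). That result states: if $\bm M,\bm N$ are sectorial and $\angle\lambda(\bm M\bm N)$ is taken in the window $(\gamma(\bm M)+\gamma(\bm N)-\pi,\ \gamma(\bm M)+\gamma(\bm N)+\pi)$, then $\angle\lambda(\bm M\bm N)\prec\phi(\bm M)+\phi(\bm N)$.

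The translation to tensors is direct. The T-eigenvalues of $\mA*\mB$ are the eigenvalues of $\bm M\bm N$, and $\gamma(\mA)=\gamma(\bm M)$ and $\gamma(\mB)=\gamma(\bm N)$ because the phase vectors coincide. Hence the hypothesis of (b) is exactly the matrix hypothesis, and the conclusion transfers verbatim.

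If one wants a self-contained argument, I would use the proof sketch behind that matrix result. Write $\bm M=\bm T^H\bm D\bm T$. Then $\bm M\bm N$ is similar to $\bm D\,(\bm T\bm N\bm T^H)$, which is a product of a unitary and a sectorial matrix. Rotating both factors by $e^{-j\gamma}$ reduces to the case of two accretive factors. One then applies the Ky Fan-type majorization of $\log$ of eigenvalues of a product, via compound matrices, to the phases.

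The main obstacle is the phase-window bookkeeping in (b). Majorization of arguments requires a consistent branch, and the stated window of length $2\pi$ around $\gamma(\mA)+\gamma(\mB)$ is exactly what fixes it. I would therefore check carefully that the branch convention for $\angle\lambda(\mA*\mB)$ in the tensor statement matches the matrix one, and not only the principal value in $(-\pi,\pi]$. The remaining steps are routine transfers through $\bcirc$.
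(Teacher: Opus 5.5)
Your proposal is correct and follows essentially the paper's route: identify canonical T-phases and T-eigenvalues with those of $\bcirc(\cdot)$, then invoke known results. The paper cites the tensor statements \citep[Theorems~3.5 and~3.6]{ding2025tensor} directly, whereas you cite the underlying matrix results and also give a valid self-contained cone/numerical-range proof of (a). One caveat: your optional compound-matrix sketch for (b) does not suffice as stated, because compound matrices of sectorial matrices need not be sectorial, so part (b) has to rest on the citation.
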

\begin{proof}
By Definition~\ref{def:canonical_phases}, tensor phases are identified with matrix phases of the block-circulant realization.
Hence statement (a) follows from \citep[Theorem~3.6]{ding2025tensor}, and statement (b) follows from \citep[Theorem~3.5]{ding2025tensor}.
\end{proof}

\begin{theorem}
Fix a symmetric gauge function $\Psi:\R^{np} \rightarrow \R$ and a tensor $\mA\in\mathcal{C}[0,\pi)$. Let $0\leq r \leq np$ and $\widehat{\mE}\in\mathcal{S}_{Tp}(\mA,r)$. Then
\[
\measuredangle_{\Psi}(\widehat{\mE}^{-1} * \mA * \widehat{\mE}^{-1})
=\Psi (\phi_{r+1}(\mA),\ldots , \phi_{np}(\mA),0,\ldots,0).
\]
Consequently,
\begin{equation}\label{eq:low_Tprank_approximation_3}
\begin{aligned}
\min_{\mE} \Bigl\{& \measuredangle_{\Psi} (\mE^{-1} * \mA * \mE^{-1}) \;\Big|\;
\mE^{-1} * \mA * \mE^{-1}\in \mathcal{C}[0,\pi), \mE \text{ sectorial},\ \Tprank(\mE) \leq r \Bigr\} \\
&= \Psi (\phi_{r+1}(\mA),\ldots , \phi_{np}(\mA),0,\ldots,0).
\end{aligned}
\end{equation}
\end{theorem}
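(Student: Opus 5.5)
The plan has two parts: compute the objective at $\widehat{\mE}$ explicitly, and then prove the matching lower bound by reducing to the matrix theorem \citep[Theorem~2]{zhao2022low} through $\bcirc$.

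\emph{Step 1 (the value at $\widehat{\mE}$).} Fix a sectorial decomposition $\bcirc(\mA)=\bm T^{*}\bm D\bm T$ with ordered phases, so that $\bcirc(\widehat{\mE})=\bm T^{*}\bm D_{r/2}(\mA)\bm T$. First I check that $\widehat{\mE}$ is sectorial. Since $\mA\in\mathcal C[0,\pi)$, every $\phi_k(\mA)\in[0,\pi)$. Hence the diagonal entries of $\bm D_{r/2}(\mA)$ have arguments in $[0,\pi/2)$, and $\widehat{\mE}$ is sectorial with $\Tprank(\widehat{\mE})\le r$ (equality when $\phi_r(\mA)\neq0$).

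Next I compute $\bcirc(\widehat{\mE})^{-1}=\bm T^{-1}\bm D_{r/2}(\mA)^{-1}\bm T^{-*}$. This gives
\[
\bcirc(\widehat{\mE}^{-1}*\mA*\widehat{\mE}^{-1})=\bm T^{-1}\bm D_{r/2}^{-1}\bm T^{-*}\bm T^{*}\bm D\bm T\bm T^{-1}\bm D_{r/2}^{-1}\bm T^{-*}=\bm T^{-1}\bigl(\bm D_{r/2}^{-1}\bm D\bm D_{r/2}^{-1}\bigr)\bm T^{-*}.
\]
The middle factor is $\operatorname{diag}(1,\ldots,1,e^{i\phi_{r+1}(\mA)},\ldots,e^{i\phi_{np}(\mA)})$, and the outer factors form a congruence by the nonsingular matrix $\bm T^{-*}$.

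By uniqueness of canonical phases under congruence (phases of a sectorial matrix are congruence invariant), the canonical T-phase vector of $\widehat{\mE}^{-1}*\mA*\widehat{\mE}^{-1}$ is a rearrangement of $(\phi_{r+1}(\mA),\ldots,\phi_{np}(\mA),0,\ldots,0)$. This tensor lies in $\mathcal C[0,\pi)$, so it is feasible. Permutation invariance of $\Psi$ then gives the first identity. One point needs care here: $\bcirc^{-1}$ must be applicable, i.e., the matrices produced must actually be block-circulant. I would handle this by building $\bm T$ as in the construction preceding the statement, namely $\bm P^T\operatorname{diag}(\bm T_1,\ldots,\bm T_p)(\bm F_p\otimes\bm I_n)$ up to the permutation. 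With this choice, $\bm D_{r/2}$ replaces only Fourier-slice diagonal entries, so every matrix above is of the form $(\bm F_p^H\otimes \bm I_n)\diag(\cdot)(\bm F_p\otimes\bm I_n)$ and hence block-circulant.

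\emph{Step 2 (lower bound).} Let $\mE$ be any feasible tensor and set $\bm E=\bcirc(\mE)$, $\bm M=\bcirc(\mA)$. Then $\bm E$ is a sectorial matrix with $\operatorname{prank}(\bm E)=\Tprank(\mE)\le r$, since both are defined through the phases of the $\bcirc$ embedding (Definition~\ref{def:canonical_phases}). Moreover,
\[
\bcirc(\mE^{-1}*\mA*\mE^{-1})=\bm E^{-1}\bm M\bm E^{-1}\in\mathcal C[0,\pi).
\]
So $\bm E$ is feasible for the matrix problem of \citep[Theorem~2]{zhao2022low} with data $\bm M\in\mathcal C[0,\pi)$, whose optimal value is $\Psi(\phi_{r+1}(\bm M),\ldots,\phi_{np}(\bm M),0,\ldots,0)$. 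Since $\phi(\bm M)=\phi(\mA)$ and $\measuredangle_\Psi(\mE^{-1}*\mA*\mE^{-1})=\Psi(\phi(\bm E^{-1}\bm M\bm E^{-1}))$, this gives the lower bound.

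If I prefer not to cite the matrix theorem directly, I would reproduce its mechanism. Write $\mX=\mE^{-1}*\mA*\mE^{-1}$, so that $\mE=\mX^{-1}\#\mA$ by the uniqueness theorem; this requires $\mX^{-1},\mA$ to be accretive, which in the positive-imaginary regime needs a rotation argument. Theorem~\ref{thm:geometric_mean1} then gives $\phi(\mE)\prec\frac12(\phi(\mA)-\phi(\mX))$ after using $\phi(\mX^{-1})=-\phi(\mX)$. Combined with at most $r$ nonzero entries of $\phi(\mE)$ and the sign constraint from $\mathcal C[0,\pi)$, a Ky--Fan/Fan dominance argument yields $\Gamma_k(\phi(\mX))\ge\Gamma_k(\phi_{r+1}(\mA),\ldots)$ for all $k$.

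The main obstacle I expect is this second step. If the direct citation is used, the obstacle is only the bookkeeping of matching the constraint sets. The equality of the minimum then follows from Steps 1 and 2 together.
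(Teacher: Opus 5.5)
Your proposal is correct and follows essentially the same route as the paper. Attainability comes from the explicit congruence $\bm T^{-1}\bigl(\bm D_{r/2}^{-1}\bm D\bm D_{r/2}^{-1}\bigr)\bm T^{-*}$, and the lower bound from lifting through $\bcirc$ and invoking \citep[Theorem~2]{zhao2022low} with $\operatorname{prank}(\bcirc(\mE))=\Tprank(\mE)$. Your extra checks fill in details the paper leaves implicit: that $\widehat{\mE}$ is sectorial with $\Tprank(\widehat{\mE})\le r$, and that $\bm T$ must be Fourier-structured so that $\bcirc^{-1}$ applies.
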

\begin{proof}
Let $\mA$ admit a sectorial decomposition
$$
\mA = \mT^{H} * \mD * \mT,
$$
where $\bcirc(\mT)$ is nonsingular and $\bcirc(\mD) = (\bm{F}_p^H \otimes \bm{I}_n) \cdot \breve{\bm{D}} \cdot (\bm{F}_p \otimes \bm{I}_n)$.
Choose a permutation matrix $\bm{P}$ so that
$$
\bm{P}^T \breve{\bm{D}} \bm{P} = \diag (e^{j\phi_{1}(\mA)},e^{j\phi_{2}(\mA)},\ldots,e^{j\phi_{np}(\mA)}),
$$
with $\phi_{k}(\mA)$ arranged in nonincreasing order.
For $\widehat{\mE} = \mT^{H} * \Lambda * \mT \in \mathcal{S}_{Tp}(\mA,r)$, we have
$$
\bcirc(\Lambda) = (\bm{F}_p^H \otimes \bm{I}_n) \cdot \bm{P} \cdot \diag (e^{j\phi_{1}(\mA)/2},e^{j\phi_{2}(\mA)/2},\ldots,e^{j\phi_{r}(\mA)/2},1,\ldots,1) \cdot \bm{P}^T \cdot (\bm{F}_p \otimes \bm{I}_n).
$$
Using $\mA=\mT^H*\mD*\mT$ and $\widehat{\mE}^{-1}=\mT^{-1}*\Lambda^{-1}*\mT^{-H}$, we obtain
$$
\widehat{\mE}^{-1} * \mA * \widehat{\mE}^{-1}
= \mT^{-1} * (\Lambda^{-1} * \mD * \Lambda^{-1}) * \mT^{-H}
= \mT^{-1} * \mathrm{V} * \mT^{-H},
$$
where
$$
\bcirc(\mathrm{V}) = (\bm{F}_p^H \otimes \bm{I}_n) \cdot \diag(1, \ldots, 1, e^{j\phi_{r+1}(\mA)}, e^{j\phi_{r+2}(\mA)}, \ldots, e^{j\phi_{np}(\mA)}) \cdot (\bm{F}_p \otimes \bm{I}_n).
$$
Hence
$$
\measuredangle_{\Psi}(\widehat{\mE}^{-1} * \mA * \widehat{\mE}^{-1}) = \Psi (\phi_{r+1}(\mA),\ldots , \phi_{np}(\mA),0,\ldots,0),
$$
since the canonical phase vector is ordered nonincreasingly and $\Psi$ is permutation invariant.
Thus the right-hand side of \eqref{eq:low_Tprank_approximation_3} is attainable.

For the reverse inequality, let $\mE$ be any feasible tensor in \eqref{eq:low_Tprank_approximation_3}, and set
\[
\bm{A}:=\bcirc(\mA),\qquad \bm{E}:=\bcirc(\mE).
\]
Then $\bm{A}$ is sectorial with canonical phases in $[0,\pi)$, $\bm{E}$ is sectorial, $\bm{E}^{-1}\bm{A}\bm{E}^{-1}$ is sectorial with canonical phases in $[0,\pi)$, and
\[
\prank(\bm{E})=\Tprank(\mE)\le r.
\]
These are exactly the assumptions of \citep[Theorem~2]{zhao2022low} (dimension $np$), so
\[
\Psi\!\bigl(\phi(\bm{E}^{-1}\bm{A}\bm{E}^{-1})\bigr)
\ge
\Psi (0,\ldots,0,\phi_{r+1}(\bm{A}),\ldots,\phi_{np}(\bm{A})).
\]
By permutation invariance of $\Psi$, the right-hand side equals
\[
\Psi(\phi_{r+1}(\bm A),\ldots,\phi_{np}(\bm A),0,\ldots,0).
\]
Using $\phi(\mZ)=\phi(\bcirc(\mZ))$ and $\bcirc(\mE^{-1}*\mA*\mE^{-1})=\bm{E}^{-1}\bm{A}\bm{E}^{-1}$ gives
\[
\measuredangle_{\Psi}(\mE^{-1} * \mA * \mE^{-1})
\ge
\Psi (\phi_{r+1}(\mA),\ldots,\phi_{np}(\mA),0,\ldots,0).
\]
Taking the minimum over all feasible $\mE$ gives the lower bound; combined with attainability, this proves \eqref{eq:low_Tprank_approximation_3}.
\end{proof}

Theorem~\ref{thm:geometric_mean1} and \eqref{eq:low_Tprank_approximation_3} therefore provide an explicit optimal value formula in the positive-imaginary regime, and every $\widehat{\mE}\in\mathcal{S}_{Tp}(\mA,r)$ is optimal.

A natural question is whether the low-$\Tprank$ approximation can be reformulated as a rank-constrained problem, analogous to classical low-rank approximation.
The next result compares $\Tprank(\cdot)$ with the classical tensor rank and highlights their difference.

\begin{theorem}\label{thm:statements}
Let $\mA \in \C^{n\times n\times p}$ be sectorial. Then:
\begin{itemize}
    \item[(a)] There exists a tensor $\mR\in\C^{n\times n\times p}$ such that $\rank(\mR) = \Tprank(\mA)$ and
    $$
        \mA-\mR \in\ \mathbb H^{n\times n\times p}_{++}.
    $$
    \item[(b)] One always has
    \[
        \Tprank(\mA) \geq \min\limits_{\mM \in\ \mathbb H^{n\times n\times p}_{++}} \rank(\mA - \mM)  = \min_{\substack{\mT\in\C^{n\times n\times p}\\ \det(\bcirc(\mT))\neq 0}} \rank (\mT^{H} * \mA * \mT - \mI_{np}),
    \]
    where $\mI_{np}$ is the identity tensor.
\end{itemize}
\end{theorem}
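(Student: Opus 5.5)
The plan is to work entirely through the block-circulant embedding and the sectorial decomposition. The idea is to split $\bm D$ into an identity part and a part that is nonzero only on the nonzero-phase entries.

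\textbf{Setting up.} Write $\bcirc(\mA)=\bm T^{H}\bm D\bm T$ with $\bm T$ nonsingular and $\bm D=\diag(e^{j\phi_1(\mA)},\ldots,e^{j\phi_{np}(\mA)})$. Both factors should be chosen in the Fourier block-diagonal form used to build $\mathcal S_{Tp}(\mA,r)$. Then $\bm T$ and $\bm D$ are themselves block-circulant, say $\bm T=\bcirc(\mT)$ and $\bm D=\bcirc(\mD)$ up to the conjugation by $\bm F_p\otimes\bm I_n$ and the permutation $\bm P$. Hence every matrix built from them below is $\bcirc$ of a tensor. Set $k:=\Tprank(\mA)$.

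\textbf{Part (a).} For a small $\varepsilon>0$, I would define
\[
\bm R:=\bm T^{H}\bigl(\bm D-\varepsilon \bm I_{np}-(1-\varepsilon)\bm J\bigr)\bm T,
\]
where $\bm J$ is the diagonal $0/1$ matrix with ones exactly at the indices $i$ with $\phi_i(\mA)=0$. On those indices, $\bm D-\varepsilon\bm I-(1-\varepsilon)\bm J$ has entry $1-\varepsilon-(1-\varepsilon)=0$. On the other indices its entry is $e^{j\phi_i}-\varepsilon\neq0$, since $|e^{j\phi_i}|=1>\varepsilon$. So $\rank(\bm R)=k$. Moreover,
\[
\bcirc(\mA)-\bm R=\bm T^{H}\bigl(\varepsilon\bm I+(1-\varepsilon)\bm J\bigr)\bm T,
\]
which is Hermitian positive definite because $\bm T$ is nonsingular. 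Put $\mR:=\bcirc^{-1}(\bm R)$. Since $\rank(\mR)=\rank(\bcirc(\mR))$, the claim follows, with $\mA-\mR\in\mathbb H^{n\times n\times p}_{++}$ because T-positivity is positivity of $\bcirc$.

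The one point to verify carefully is that $\bm J$ is compatible with the block structure. It is a diagonal matrix in the permuted Fourier domain, so conjugating back gives a block-circulant matrix. Under the paper's convention $\rank(\cdot)$ is the rank of $\bcirc$, so the counting works. If a tensor defined slicewise is preferred, one uses $\rank=\sum_i\rank(\text{Fourier slice})$ and counts the zero phases slice by slice.

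\textbf{Part (b).} The inequality follows directly from (a), because $\mM:=\mA-\mR$ is feasible. For the equality, I would take any $\mM\in\mathbb H_{++}$ and write $\mM=\mS^{H}*\mS$, for instance with $\mS=\mM^{1/2}$ defined through the Fourier slices. Setting $\mT:=\mS^{-1}$ gives
\[
\rank(\mA-\mM)=\rank\bigl(\mT^{H}*(\mA-\mM)*\mT\bigr)=\rank(\mT^{H}*\mA*\mT-\mI_{np}),
\]
since rank is invariant under nonsingular congruence. Conversely, any nonsingular $\mT$ gives $\mM:=\mT^{-H}*\mT^{-1}\in\mathbb H_{++}$ with the same rank. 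So the two minima range over the same set of values. Both minima are attained because rank takes integer values.

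The main obstacle I expect is the bookkeeping in (a): the diagonal surgery must be carried out in the globally permuted Fourier coordinates and must still yield a genuine tensor via $\bcirc^{-1}$.
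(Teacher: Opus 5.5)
Your proposal is correct and follows the paper's proof. In (a) you congruence a positive diagonal matrix by the sectorial factor $\mT$ to obtain $\mM\in\mathbb H^{n\times n\times p}_{++}$ and count $\rank(\mA-\mM)$ on the diagonal, and in (b) you use the same factorization $\mM=\mT_0^H*\mT_0$ with $\mT=\mT_0^{-1}$. The only difference is that your $\varepsilon$ and $\bm J$ are unnecessary: taking $\varepsilon=1$ recovers the paper's choice $\mM=\mT^H*\mT$, $\mR=\mT^H*(\mD-\mI_{np})*\mT$. This already has rank $\Tprank(\mA)$, since $e^{j\phi_i}-1\neq0$ exactly when $\phi_i\neq0$ for $\phi_i\in(-\pi,\pi]$.
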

\begin{proof}
Let $\mA = \mT^{H} * \mD * \mT$ be a sectorial decomposition with nonsingular $\mT$ and diagonal unitary $\mD$. In terms of the block-circulant embedding this means
$$
\bcirc(\mD) = (\bm{F}_p^H \otimes \bm{I}_n) \cdot \diag(e^{j\phi_1(\mA)}, e^{j\phi_2(\mA)}, \ldots, e^{j\phi_{np}(\mA)}) \cdot (\bm{F}_p \otimes \bm{I}_n).
$$

For statement (a), let $\mM = \mT^{H} * \mT$. Since $\mT$ is nonsingular, $\mM$ is T-positive definite, i.e., $\mM \in \mathbb{H}^{n\times n\times p}_{++}$.
Define $\mR = \mA - \mM = \mT^{H} * (\mD - \mI_{np}) * \mT$.

We now show that $\rank(\mR) = \Tprank(\mA)$. By the properties of the T-product and the DFT block-diagonalization,
$$
\bcirc(\mD - \mI_{np}) = (\bm{F}_p^H \otimes \bm{I}_n) \cdot \diag(e^{j\phi_1(\mA)} - 1, \ldots, e^{j\phi_{np}(\mA)} - 1) \cdot (\bm{F}_p \otimes \bm{I}_n).
$$
The rank of $\bcirc(\mD - \mI_{np})$ equals the number of nonzero diagonal entries, which is
$$
\#\{k : e^{j\phi_k(\mA)} \neq 1\} = \#\{k : \phi_k(\mA) \neq 0\} = \Tprank(\mA).
$$
Since $\bcirc(\mR) = \bcirc(\mT)^H \bcirc(\mD - \mI_{np}) \bcirc(\mT)$ and $\bcirc(\mT)$ is nonsingular, we have
$$
\rank(\mR) = \rank(\bcirc(\mR)) = \rank(\bcirc(\mD - \mI_{np})) = \Tprank(\mA).
$$
Moreover, $\mA - \mR = \mM \in \mathbb{H}^{n\times n\times p}_{++}$.

For statement (b), there exists $\mR$ with $\rank(\mR) = \Tprank(\mA)$ such that $\mA - \mR \in \mathbb{H}^{n\times n\times p}_{++}$ from statement (a).
This shows that $\Tprank(\mA) \geq \min_{\mM \in \mathbb{H}^{n\times n\times p}_{++}} \rank(\mA - \mM)$.
Conversely, for any $\mM \in \mathbb{H}^{n\times n\times p}_{++}$, we can write $\mM = \mT_0^H * \mT_0$ for some nonsingular $\mT_0$.
Then $\mA - \mM = \mT_0^H * (\mT_0^{-H} * \mA * \mT_0^{-1} - \mI_{np}) * \mT_0$, so
$$
\rank(\mA - \mM) = \rank(\mT_0^{-H} * \mA * \mT_0^{-1} - \mI_{np}) = \rank(\mT^H * \mA * \mT - \mI_{np})
$$
where $\mT = \mT_0^{-1}$. 
Conversely, for any nonsingular $\mT$, setting $\mM:=\mT^{-H} * \mT^{-1}\in\mathbb{H}^{n\times n\times p}_{++}$ gives
$$
\rank(\mT^{H} * \mA * \mT - \mI_{np})=\rank(\mA-\mM).
$$
Taking minima over $\mM\in\mathbb{H}^{n\times n\times p}_{++}$ and over nonsingular $\mT$ yields the equality of the two minimum expressions in (b).
\end{proof}

Majorization provides a convenient language for comparing phase vectors and will be used throughout.
In the eigenvalue setting, Ky--Fan type bounds admit a refinement known as Lidskii's inequality; see,
for example,~\citep[Cor.~III.4.2]{kubo1980means}. For $\mX,\mY\in\mathbb{H}^{n\times n\times p}$,
\begin{equation}\label{eq:Lidskii inequality}
    2\lambda^\downarrow(\mX \nabla \mY) - \lambda^\downarrow(\mX)
    = \lambda^\downarrow(\mX + \mY) - \lambda^\downarrow(\mX)
    \prec \lambda^\downarrow(\mY).
\end{equation}
A standard route to Schmidt--Mirsky type results is to apply~\eqref{eq:Lidskii inequality} to Hermitian dilation constructed from $\bcirc(\mE)$ and $\bcirc(\mA-\mE)$, namely
$$
\begin{bmatrix} 0 & \bcirc(\mE) \\ \bcirc(\mE)^{H} & 0 \end{bmatrix},
\qquad
\begin{bmatrix} 0 & \bcirc(\mA-\mE) \\ (\bcirc(\mA-\mE))^{H} & 0 \end{bmatrix}
\in \mathbb{H}^{2np\times 2np}.
$$
Since Theorem~\ref{thm:geometric_mean1} gives a Ky--Fan-like majorization bound for canonical phases under
the geometric mean, one may ask whether a phase-domain analogue of Lidskii's inequality holds, namely
\begin{equation}\label{eq:inequality}
    2\phi(\mA \# \mB) - \phi(\mA) \prec \phi(\mB),
    \qquad \forall\,\mA,\mB\in\mathbb{T}^{n\times n\times p}_{+}.
\end{equation}
Establishing or refuting \eqref{eq:inequality} in full generality is beyond the scope of this paper.

The formulation above can also be written by swapping the roles of $\mE$ and $\mX$.
Indeed, since $\mE=\mA-\mX$ is equivalent to $\mX=\mA-\mE$, we may rewrite the problem as
\begin{equation}\label{eq:low rank approximation}
    \widehat{\mX}
    = \argmin_{\mX}\Bigl\{\Phi(\sigma(\mX)) \;:\; \rank\bigl((-\mX)\nabla \mA\bigr)\le r\Bigr\}.
\end{equation}
This equivalent form highlights the analogy between the arithmetic-mean low-rank model
and our geometric-mean phase model.

Algorithm~\ref{alg:tphase} summarizes the construction of a half-phase truncation tensor
$\mE^*\in\mathcal{S}_{Tp}(\mA,r)$.
The computation is performed slice-wise in the Fourier domain: we compute sectorial decompositions of the
Fourier slices, sort the canonical phases across all slices, and then assemble a diagonal half-phase factor so that
$(\mE^*)^{-1}*\mA*(\mE^*)^{-1}$ retains the $np-r$ smallest phases, equivalently, the $r$ largest phases are canceled.
For reproducibility, phase sorting is deterministic (ties are broken by slice index, then in-slice index).
With dense linear algebra, the dominant cost is sectorial decomposition of $p$ frontal slices, giving
$\mathcal{O}(pn^3)$ flops, while the global phase sort costs $\mathcal{O}(np\log(np))$.
All reported phase values are obtained from principal arguments of Fourier-slice eigenvalues using this deterministic sorting rule.

\begin{algorithm}[t!]
\DontPrintSemicolon
\caption{Low T-Phase-Rank Approximation \label{alg:tphase}}
\KwIn{$\mA\in\mathbb{C}^{n\times n\times p}$ (sectorial), $r\in\mathbb{N}$}
\KwOut{$\mE^*$}
$\bcirc(\mA)=(\bm F_p^{H}\otimes \bm I_n) \diag(\bm A_1,\ldots,\bm A_p) (\bm F_p\otimes \bm I_n)$\;
\For{$i=1,\ldots,p$}{
$\bm A_i=\bm T_i^{H}\bm D_i\bm T_i,\quad \bm D_i=\diag\big(e^{ j\theta_{i,1}},\ldots,e^{j\theta_{i,n}}\big)$\;
}
$\widehat{\bm T}_{\text{blk}}=\diag(\bm T_1,\ldots,\bm T_p)$\;
$\widehat{\bm D}_{\text{blk}}=\diag(\bm D_1,\ldots,\bm D_p)$\;
Find permutation matrix $\bm P$ such that $\bm P^T \widehat{\bm D}_{\text{blk}} \bm P = \diag(e^{j\phi_1(\mA)},\ldots,e^{j\phi_{np}(\mA)})$ with sorted phases $\phi_1(\mA) \ge \ldots \ge \phi_{np}(\mA)$\;
$\widehat{\bm \Lambda}^{(r)}_{\text{sorted}}=\diag\big(e^{j\phi_1(\mA)/2},\ldots,e^{j\phi_r(\mA)/2},\underbrace{1,\ldots,1}_{np-r}\big)$\;
$\widehat{\bm \Lambda}^{(r)}_{\text{blk}} = \bm P \widehat{\bm \Lambda}^{(r)}_{\text{sorted}} \bm P^T$\;
$\widehat{\bm E}_{\text{blk}} = \widehat{\bm T}_{\text{blk}}^H \widehat{\bm \Lambda}^{(r)}_{\text{blk}} \widehat{\bm T}_{\text{blk}}$\;
$\mE^* = \bcirc^{-1}\big((\bm F_p^{H}\otimes \bm I_n) \widehat{\bm E}_{\text{blk}} (\bm F_p\otimes \bm I_n)\big)$\;
\Return $\mE^*$\;
\end{algorithm}

\begin{example}
Let $n=2$ and $p=3$.
In the frequency domain, set the frontal slices
$$
\bm A_1=\diag\big(e^{0.6j},e^{0.2j}\big),\qquad
\bm A_2=\diag\big(e^{0.4j},e^{0.1j}\big),\qquad
\bm A_3=\diag\big(e^{0.3j},e^{0.05j}\big),
$$
and define the tensor $\mA\in\C^{2\times2\times3}$ by
$$
\bcirc(\mA)=\big(\bm F_{3}^{\mathrm H}\otimes \bm I_{2}\big)
\diag(\bm A_1,\bm A_2,\bm A_3)
\big(\bm F_{3}\otimes \bm I_{2}\big).
$$
Then $\mA\in\mathcal{C}[0,\pi)$ and the canonical phase vector is
$$
\phi(\mA) = (0.6,0.4,0.3,0.2,0.1,0.05).
$$
Fix $r=3$ and apply Algorithm~\ref{alg:tphase}.
Since $\bm A_k$ ($k=1,2,3$) are already diagonal unitary, the sectorial decomposition yields $\bm T_k = \bm I_2$ for all $k$.
The sorted half-phase diagonal matrix is
$$
\widehat{\bm \Lambda}^{(r)}_{\mathrm{sorted}}=\diag\big(e^{0.3j},e^{0.2j},e^{0.15j},1,1,1\big),
$$
and after permuting back to block structure,
$$
\widehat{\bm \Lambda}^{(r)}_{\mathrm{blk}}=\diag\Big(\diag(e^{0.3j},1), \diag(e^{0.2j},1), \diag(e^{0.15j},1)\Big).
$$
Hence the output tensor satisfies $\bcirc(\mE^*)=(\bm F_3^{\mathrm H}\otimes\bm I_2)\widehat{\bm \Lambda}^{(r)}_{\mathrm{blk}}(\bm F_3\otimes\bm I_2)$.

Let $\mW:=(\mE^*)^{-1}*\mA*(\mE^*)^{-1}$. Then the frequency-domain slices of $\mW$ are
$$
\bm W_1=\diag(1,e^{0.2j}),\qquad
\bm W_2=\diag(1,e^{0.1j}),\qquad
\bm W_3=\diag(1,e^{0.05j}),
$$
and 
$$
\phi(\mW)=(0.2,0.1,0.05,0,0,0)\quad\text{and}\quad \Tprank(\mE^*)=3.
$$
Thus, the top $r$ phases are driven to $0$ while the remaining phases are preserved.
\end{example}

\section{Phase of tensor MIMO LTI systems and small phase theorem}

Recent work has explored representing MIMO (multi-input multi-output) LTI dynamics using third-order tensors together with the T-product; see, e.g., \citep{che2022fast,che2024randomized,wang2025algebraic}.

For $\mathcal{A},\mathcal{B},\mathcal{C},\mathcal{D} \in \mathbb{R}^{m\times m\times p}$ we consider the continuous-time tensor state-space model
\begin{equation}\label{eq:tensor_mimo}
    \begin{cases}
        \dot{\mathcal{X}}(t) =  \mathcal{A}*\mathcal{X}(t) + \mathcal{B}*\mathcal{U}(t)\\
        \mathcal{Y}(t) = \mathcal{C}*\mathcal{X}(t) + \mathcal{D}*\mathcal{U}(t)
    \end{cases}
\end{equation}
where $\mathcal{X}(t)\in \mathbb{R}^{m\times 1 \times p}$ is the state tensor, $\mathcal{U}(t)\in \mathbb{R}^{m\times 1 \times p}$ is the input (control) tensor, and $\mathcal{Y}(t)\in \mathbb{R}^{m\times 1 \times p}$ is the output tensor.
The associated transfer tensor is
\begin{equation}
    \mG(s) = \mC * (s\mI - \mA)^{-1} * \mB + \mD,
\end{equation}
whose entries are real-rational proper functions of $s$.
Let $\mathcal{RH}_{\infty}^{m \times m}$ denote the set of real-rational proper stable transfer matrices, and let $\mathcal{RH}_{\infty}^{m \times m \times p}$ be the corresponding tensor class (equivalently, $\mG\in\mathcal{RH}_{\infty}^{m \times m \times p}$ iff $\bcirc(\mG)\in \mathcal{RH}_{\infty}^{mp \times mp}$).

For $\mG \in \mathcal{RH}_{\infty}^{m \times m \times p}$, the frequency-dependent vector $\sigma(\mG(j\omega))\in\R^{mp}$ describes the magnitude response via T-singular values.
The associated $\mathcal{H}_{\infty}$ norm is
\begin{equation*}
    \| \mG \|_{\infty} = \sup_{\omega \in \R} \overline{\sigma}(\mG (j\omega)).
\end{equation*}

\begin{example}
    Consider a tensor LTI system $\mathcal{G}(s) \in \mathbb{C}^{2 \times 2 \times 2}$ specified by two frontal slices $G^{(1)}(s)$ and $G^{(2)}(s)$:
    \begin{align*}
        \bm{G}^{(1)}(s) =&~ \frac{1}{s^2 + 2s + 2}\begin{bmatrix}
            2s^2 + 3s + 4 & 0.5s^2 + s + 1 \\
            0.5s^2 + s + 1 & 1.5s^2 + 2s + 3
        \end{bmatrix}\\
        \bm{G}^{(2)}(s) =&~ \frac{1}{s^2 + 2s + 2}\begin{bmatrix}
            0.8s^2 + s + 1 & 0.2s^2 + 0.5s + 0.2 \\
            0.2s^2 + 0.5s + 0.2 & 0.9s^2 + 1.2s + 1
        \end{bmatrix}
    \end{align*}
    Figure~\ref{fig:bodeplot_sectorial} shows the Bode plot of the block-circulant realization.
    \begin{figure}[h]
        \centering
        \includegraphics[width=0.8\linewidth]{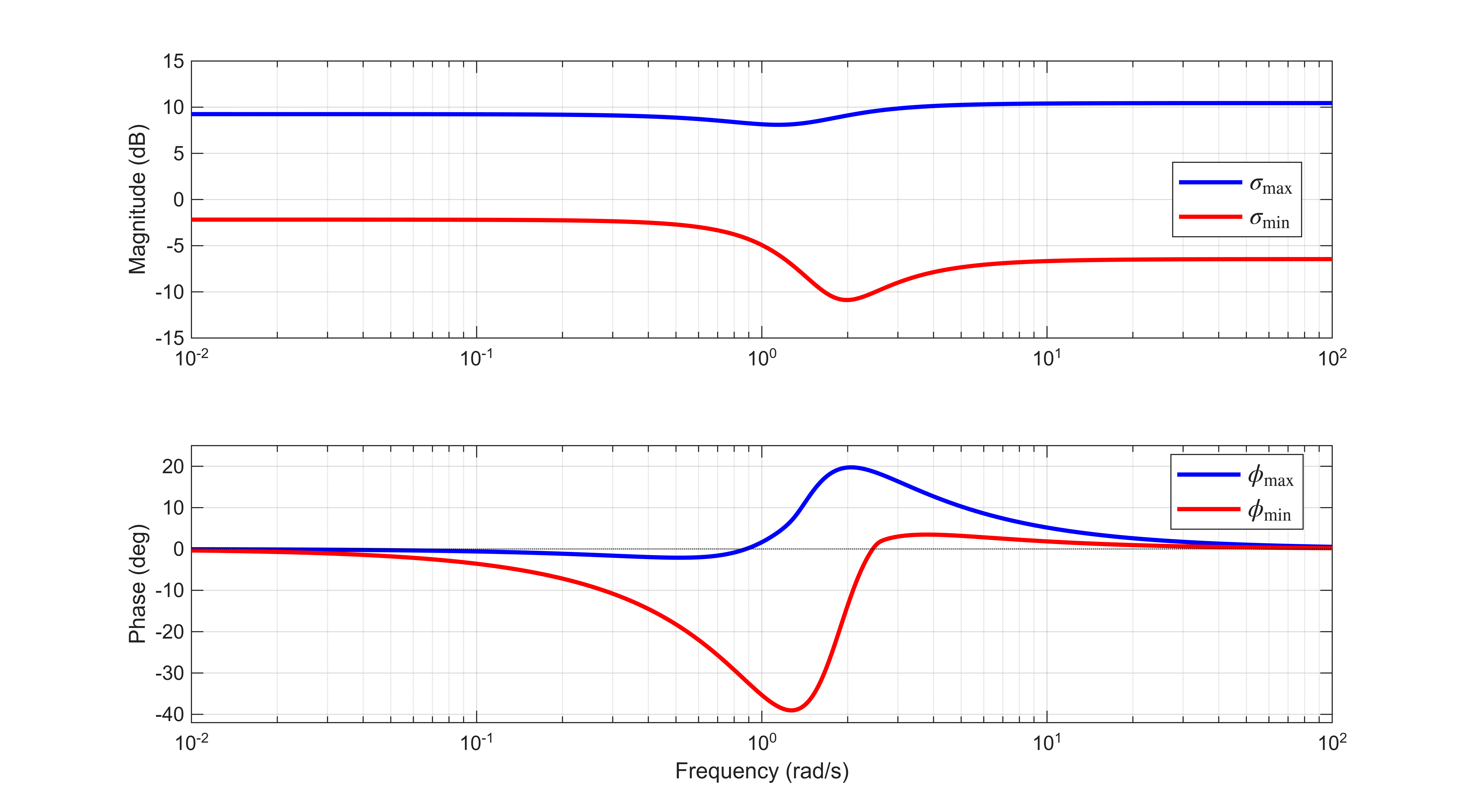}
        \caption{Bode plot of the tensor MIMO system}
        \label{fig:bodeplot_sectorial}
    \end{figure}

    Using a log-spaced frequency grid over the plotted Bode range (the same sweep used for Figure~\ref{fig:bodeplot_sectorial}), we evaluate
    $\underline{\Phi}_\infty(\mathcal{G})$ and $\overline{\Phi}_\infty(\mathcal{G})$ from the canonical phase bounds, obtaining
    \[
    \bigl[\underline{\Phi}_\infty(\mathcal{G}),\,\overline{\Phi}_\infty(\mathcal{G})\bigr]
    \approx [-39.04^\circ,\, 19.74^\circ].
    \]
    Since the phase spread is approximately $58.8^\circ \leq 180^\circ$, the system is sectorial, i.e., $\mathcal{G}\in \mathcal{C}(\alpha,\beta)$ for some $\beta-\alpha\le \pi$ (and the cone $\mathcal{C}(\alpha,\beta)$ is convex).
\end{example}

In the remainder of this section we restrict attention to systems whose frequency responses admit a consistent principal-phase branch.
Concretely, we assume that for every $\omega\in\R$ the numerical range $W(\mG(j\omega))$ avoids the negative real axis; this guarantees that the canonical phase vector is well defined.
We call $\phi(\mG(j\omega))\in\R^{mp}$ the \emph{phase response} of $\mG$.

To summarize this frequency-dependent phase information by a single scalar, we use the $\mathcal{H}_{\infty}$-type phase measure
\begin{equation*}
    \Phi_{\infty}(\mG) = \sup_{\omega \in \R, \|\bm{x}\|=1} \angle \bm{x}^H \bcirc(\mG(j\omega))\bm{x},
\end{equation*}
which plays the role of an $\mathcal{H}_{\infty}$ phase specification.
Clearly $\Phi_{\infty}(\mG) < \pi$. For $\alpha\in[0,\pi)$ we define the phase-bounded class
\begin{equation*}
    \mathsf{C}[\alpha] = \{\mG \in \mathcal{RH}_{\infty}^{m \times m \times p} : \Phi_{\infty}(\mG) < \alpha\}.
\end{equation*}

Suppose $\mG, \mH$ are both $m \times m \times p$ real rational proper stable transfer function tensors. The feedback interconnection of $\mG$ and $\mH$ is said to be stable if the Gang of Four tensor
\begin{equation*}
    \mG \widetilde{\#}\mH =
    \begin{bmatrix}
        (\mI_{mp} + \mH * \mG)^{-1} & (\mI_{mp} + \mH * \mG)^{-1} * \mH \\
        \mG * (\mI_{mp} + \mH * \mG)^{-1} & \mG * (\mI_{mp} + \mH * \mG)^{-1} * \mH
    \end{bmatrix}
\end{equation*}
is stable, i.e., $\mG \widetilde{\#}\mH \in \mathcal{RH}_{\infty}^{2m \times 2m \times p}$.
We seek sufficient and necessary conditions for the stability of the tensor feedback interconnection.

\begin{lemma}\label{lem:feedback_bcirc_equiv}
    The feedback interconnection of $\mG$ and $\mH$ is stable if and only if the feedback interconnection of $\bcirc(\mG)$ and $\bcirc(\mH)$ is stable.
\end{lemma}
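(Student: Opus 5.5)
The plan is to show that $\bcirc(\cdot)$, applied entrywise to transfer tensors (for each fixed $s$), is an injective algebra homomorphism that commutes with every operation appearing in the Gang of Four, and that it preserves membership in $\mathcal{RH}_\infty$ in both directions. Once this is in place, the stability condition for the tensor interconnection turns into the matrix one.

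\textbf{Homomorphism properties.} For each $s$ we have:
\[
\bcirc(\mA*\mB)=\bcirc(\mA)\bcirc(\mB),\qquad \bcirc(\mA+\mB)=\bcirc(\mA)+\bcirc(\mB),\qquad \bcirc(\mI_{mp})=\bm I_{mp}.
\]
Together with \eqref{eq:Tinverse}, these give the following. The tensor $\mI_{mp}+\mH(s)*\mG(s)$ is T-invertible exactly when $\bm I_{mp}+\bcirc(\mH(s))\bcirc(\mG(s))$ is invertible, and then
\[
\bcirc\big((\mI_{mp}+\mH*\mG)^{-1}\big)=(\bm I+\bcirc(\mH)\bcirc(\mG))^{-1}.
\]
Applying this to each of the four blocks shows that the $2\times2$ block array obtained by applying $\bcirc$ blockwise to $\mG\widetilde{\#}\mH$ is exactly the matrix Gang of Four $\bcirc(\mG)\widetilde{\#}\bcirc(\mH)$. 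Here the well-posedness of the two interconnections (properness of the inverse, i.e.\ invertibility at $s=\infty$) is the same condition, since it is the same matrix statement.

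\textbf{Stability equivalence.} Next I need that a transfer tensor $\mathcal{K}$ lies in $\mathcal{RH}_\infty$ if and only if $\bcirc(\mathcal{K})$ lies in $\mathcal{RH}_\infty$. The paper takes this as the definition of the tensor class, so it is immediate. I would still remark why it is natural. The entries of $\bcirc(\mathcal{K})$ are just the entries of $\mathcal{K}$ repeated in circulant positions. Hence they are real-rational, proper and free of poles in $\Re s\ge0$ exactly when the entries of $\mathcal{K}$ are. For the $2m\times 2m\times p$ Gang of Four tensor, $\bcirc$ of the whole tensor differs from the blockwise-$\bcirc$ array only by a fixed permutation similarity. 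This permutation interleaves the two block rows and columns with the circulant structure. A constant permutation similarity preserves membership in $\mathcal{RH}_\infty$, so both formulations agree.

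\textbf{Main obstacle.} The main subtle step is this permutation bookkeeping, namely checking that
\[
\bcirc\!\begin{bmatrix}\mathcal{K}_{11}&\mathcal{K}_{12}\\ \mathcal{K}_{21}&\mathcal{K}_{22}\end{bmatrix}=\bm Q^{T}\begin{bmatrix}\bcirc(\mathcal{K}_{11})&\bcirc(\mathcal{K}_{12})\\ \bcirc(\mathcal{K}_{21})&\bcirc(\mathcal{K}_{22})\end{bmatrix}\bm Q
\]
for a permutation $\bm Q$ independent of $s$. I would verify this directly from the frontal-slice definition: the $k$th frontal slice of the block tensor is the block matrix of the $k$th slices. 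With that identity, the chain "tensor Gang of Four stable $\iff$ its $\bcirc$ is in $\mathcal{RH}_\infty^{2mp\times2mp}$ $\iff$ the matrix Gang of Four of $\bcirc(\mG),\bcirc(\mH)$ is stable" closes the proof.
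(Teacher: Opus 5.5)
Your proposal is correct and follows essentially the same route as the paper. You use multiplicativity of $\bcirc$ together with the inverse identity to turn the blockwise image of the tensor Gang of Four into the matrix Gang of Four $\bcirc(\mG)\widetilde{\#}\bcirc(\mH)$, and then use a constant permutation similarity between $\bcirc$ of the whole $2m\times 2m\times p$ tensor and that blockwise array to transfer $\mathcal{RH}_\infty$ membership. Your check of the permutation from the frontal-slice structure and your remark on well-posedness fill in details that the paper only asserts.
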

\begin{proof}
    By multiplicativity of $\bcirc$ and the inverse identity for the T-product,
    \begin{equation*}
        \bcirc((\mI_{mp} + \mH * \mG)^{-1}) = \bcirc(\mI_{mp} + \mH * \mG)^{-1} = (\bm{I}_{mp} + \bcirc(\mH) \bcirc(\mG))^{-1}.
    \end{equation*}
    Moreover, there exists a permutation matrix $\bm{P}$ that satisfies
    \begin{equation*}
        \bcirc(\mG \widetilde{\#}\mH)=\bm{P}^{T} 
        \begin{bmatrix}
            \bcirc((\mI_{mp} + \mH * \mG)^{-1}) & \bcirc((\mI_{mp} + \mH * \mG)^{-1} * \mH) \\
            \bcirc(\mG * (\mI_{mp} + \mH * \mG)^{-1}) & \bcirc(\mG * (\mI_{mp} + \mH * \mG)^{-1} * \mH)
        \end{bmatrix} \bm{P}
    \end{equation*}
    Combining the above equations and letting $\bm{T} = (\bm{I}_{mp} + \bcirc(\mH) \bcirc(\mG))^{-1}$, we obtain
    \begin{equation*}
        \bcirc(\mG \widetilde{\#}\mH)=\bm{P}^{T} 
        \begin{bmatrix}
            \bm{T} & \bm{T} \bcirc(\mH) \\
            \bcirc(\mG)\bm{T} & \bcirc(\mG) \bm{T} \bcirc(\mH)
        \end{bmatrix} \bm{P} = \bm{P}^{T} \bcirc(\mG )\widetilde{\#} \bcirc(\mH) \bm{P},
    \end{equation*}
    Thus $\bcirc(\mG \widetilde{\#}\mH)$ is permutation-similar to $\bcirc(\mG)\widetilde{\#}\bcirc(\mH)$.
    Since permutation similarity preserves poles and stability, the two feedback interconnections are stable simultaneously.
\end{proof}

The small gain theorem is a basic tool in robust control~\citep{Chen2020small,liu2016robust,khalil1996robust}.
In the matrix setting it asserts that the feedback interconnection is stable when the loop gain satisfies
$\overline{\sigma}(\bm{G}(j\omega))\,\overline{\sigma}(\bm{H}(j\omega))<1$ for all $\omega$.
Through the block-circulant representation, the same frequency-domain argument applies to transfer tensors, yielding the following tensor formulation.

\begin{theorem}\label{thm:theorem 5.4}
    For $\mG, \mH \in \mathcal{RH}_{\infty}^{m \times m \times p}$, the feedback system $\mG \widetilde{\#} \mH$ is stable if
    \begin{equation*}
        \overline{\sigma}(\mG(j\omega)) \overline{\sigma}(\mH(j\omega)) < 1
    \end{equation*}
    holds for all $\omega \in \mathbb{R}$.
\end{theorem}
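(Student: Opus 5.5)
The plan is to reduce the tensor statement to the classical matrix small gain theorem through the block-circulant embedding, exactly as Lemma~\ref{lem:feedback_bcirc_equiv} suggests. By Lemma~\ref{lem:feedback_bcirc_equiv}, $\mG\widetilde{\#}\mH$ is stable if and only if the matrix feedback interconnection $\bcirc(\mG)\widetilde{\#}\bcirc(\mH)$ is stable. So it suffices to verify the hypotheses of the matrix small gain theorem for the pair $\bm G:=\bcirc(\mG)$, $\bm H:=\bcirc(\mH)$, viewed as $mp\times mp$ transfer matrices.

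\textbf{Step 1: membership.} By the stated convention, $\mG,\mH\in\mathcal{RH}_\infty^{m\times m\times p}$ means exactly that $\bm G,\bm H\in\mathcal{RH}_\infty^{mp\times mp}$. Also, $\bcirc(\mG(s))=\bcirc(\mG)(s)$ for every $s$, since $\bcirc$ is linear and acts entrywise on frontal slices, so it commutes with evaluation at $s$.

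\textbf{Step 2: gains.} For each $\omega$, the T-singular values of $\mG(j\omega)$ are the singular values of the Fourier-domain block diagonal matrix in Lemma~\ref{LemmaFourierBlockDiag}. Because $\bm F_p\otimes\bm I_m$ is unitary, these coincide with the singular values of $\bcirc(\mG(j\omega))=\bm G(j\omega)$. Hence $\overline{\sigma}(\mG(j\omega))=\overline{\sigma}(\bm G(j\omega))$, and likewise for $\mH$. The hypothesis therefore becomes $\overline{\sigma}(\bm G(j\omega))\,\overline{\sigma}(\bm H(j\omega))<1$ for all $\omega\in\R$.

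\textbf{Step 3: apply the matrix theorem.} The matrix small gain theorem~\citep{Chen2020small,liu2016robust} then gives stability of $\bm G\widetilde{\#}\bm H$. Lemma~\ref{lem:feedback_bcirc_equiv} transfers this back to $\mG\widetilde{\#}\mH$.

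\textbf{Main obstacle.} The only delicate point is whether the matrix small gain theorem in its frequency-wise form ($\omega\in\R$ only) is what is being invoked, since it usually requires the condition on the closed right half-plane or at $\omega=\infty$. For proper stable rational $\bm G,\bm H$, the strict inequality for all real $\omega$, together with continuity at $\infty$, yields $\sup_\omega\overline{\sigma}(\bm G)\overline{\sigma}(\bm H)\le 1$, which may be non-strict.

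To handle this, I would first try the standard homotopy argument directly on the matrices:
\begin{itemize}
\item For $\tau\in[0,1]$, consider $\det(\bm I+\tau\bm H(s)\bm G(s))$ on the imaginary axis, including $\infty$.
\item Show it never vanishes, since $\overline{\sigma}(\tau\bm H\bm G)<1$ there. At $\omega=\infty$, the limit $\bm H(\infty)\bm G(\infty)$ must also satisfy the strict bound; otherwise, state the hypothesis as including $\omega=\infty$.
\item Use the Nyquist criterion to conclude that the winding number is constant in $\tau$, hence zero. Therefore $(\bm I+\bm H\bm G)^{-1}$ has no unstable poles, and with $\bm G,\bm H$ stable, all four blocks of $\bm G\widetilde{\#}\bm H$ lie in $\mathcal{RH}_\infty$.
\end{itemize}

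Everything else is routine bookkeeping through $\bcirc$.
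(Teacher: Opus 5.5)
Your argument is the same as the paper's. It equates $\overline{\sigma}(\mG(j\omega))$ with $\overline{\sigma}(\bcirc(\mG)(j\omega))$ through the unitary Fourier block diagonalization, applies the matrix small gain theorem to $\bcirc(\mG),\bcirc(\mH)\in\mathcal{RH}_\infty^{mp\times mp}$, and pulls stability back with Lemma~\ref{lem:feedback_bcirc_equiv}.

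Your concern about $\omega=\infty$ is justified, and the paper's proof does not address it. In fact the statement as written, with the strict bound only for $\omega\in\R$, is false. Take $m=p=1$, $\mG(s)=\frac{s}{s+1}$ and $\mH(s)=-\frac{s}{s+1}$. Then $\overline{\sigma}(\mG(j\omega))\,\overline{\sigma}(\mH(j\omega))=\frac{\omega^2}{1+\omega^2}<1$ for every real $\omega$. However, $(1+\mH\mG)^{-1}=\frac{(s+1)^2}{2s+1}$ is improper, so $\mG\widetilde{\#}\mH\notin\mathcal{RH}_\infty$.

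So no argument can prove the theorem as stated; the paper's appeal to ``the matrix small gain theorem'' silently uses a stronger hypothesis. The frequency-wise matrix version is normally stated for $\omega\in[0,\infty]$, including the point at infinity, just as the paper's own small phase theorem is.

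With that amendment your homotopy/Nyquist argument is correct and completes the proof. It even suffices to add only well-posedness, $\det(\bm I+\bm H(\infty)\bm G(\infty))\neq 0$. Continuity gives $\overline{\sigma}(\bm H(\infty)\bm G(\infty))\le 1$, so $\det(\bm I+\tau\bm H\bm G)$ stays nonzero at infinity for all $\tau<1$, and well-posedness handles $\tau=1$. Well-posedness also makes $(\bm I+\bm H\bm G)^{-1}$ proper.
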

\begin{proof}
    By T-SVD spectral invariance under $\bcirc$,
    \begin{equation*}
        \overline{\sigma}(\mG(j\omega)) = \overline{\sigma}(\bcirc(\mG(j\omega))), \quad 
        \overline{\sigma}(\mH(j\omega)) = \overline{\sigma}(\bcirc(\mH(j\omega)))
    \end{equation*}
    for all $\omega\in\mathbb{R}$. Hence
    \begin{equation*}
        \overline{\sigma}(\bcirc(\mG(j\omega))) \overline{\sigma}(\bcirc(\mH(j\omega))) < 1
    \end{equation*}
    for all $\omega \in \mathbb{R}$.
    Note that $\bcirc(\mG),\bcirc(\mH) \in \mathcal{RH}_{\infty}^{mp\times mp}$.
    By the matrix small gain theorem, the feedback system $\bcirc(\mG) \widetilde{\#} \bcirc(\mH)$ is stable.
    By Lemma~\ref{lem:feedback_bcirc_equiv}, this is equivalent to stability of $\mG \widetilde{\#} \mH$.
\end{proof}

Besides gain-based criteria, phase bounds also provide stability certificates for feedback interconnections.
The small phase theorem plays a central role in phase-based analysis of MIMO systems.
For $\mG \in \mathcal{RH}_{\infty}^{m \times m \times p}$, the frequency response is conjugate symmetric, i.e., $\mG(-j\omega)=\overline{\mG(j\omega)}$, and hence $W(\mG(j\omega))$ and $W(\mG(-j\omega))$ are symmetric with respect to the real axis.
Using the tensor phase notions developed above, we obtain a tensor analogue of the small phase theorem.

\begin{theorem}
    Let $\mG \in \mathcal{RH}_{\infty}^{m \times m \times p}$ be frequency-wise quasi-sectorial, and let $\mH \in \mathcal{RH}_{\infty}^{m \times m \times p}$ be frequency-wise semi-sectorial. Then the feedback system $\mG \widetilde{\#} \mH$ is stable if
    \begin{equation*}
        \overline{\phi}(\mG(j\omega)) + \overline{\phi}(\mH(j\omega)) < \pi \quad \text{and} \quad \underline{\phi}(\mG(j\omega)) + \underline{\phi}(\mH(j\omega)) > -\pi
    \end{equation*}
    hold for all $\omega \in [0, \infty]$.
\end{theorem}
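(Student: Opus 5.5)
The plan is to reduce the tensor small phase theorem to the matrix small phase theorem of \citep{chen2024phase2,Chen2024phase1} via the block-circulant embedding, in the same way Theorem~\ref{thm:theorem 5.4} was obtained from the matrix small gain theorem.

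First I will transfer the hypotheses. By Definition~\ref{def:canonical_phases} the canonical T-phases of a sectorial tensor are, by definition, the canonical phases of its block-circulant matrix. Since $\bcirc$ is linear and entrywise in $s$, we have $\bcirc(\mG(j\omega))=\bcirc(\mG)(j\omega)$, and similarly for $\mH$. Moreover $W(\mG(j\omega))=W(\bcirc(\mG(j\omega)))$. Hence frequency-wise quasi-sectoriality of $\mG$ (resp.\ semi-sectoriality of $\mH$) is exactly frequency-wise quasi-sectoriality (resp.\ semi-sectoriality) of $\bcirc(\mG)$ (resp.\ $\bcirc(\mH)$). Also $\overline{\phi}(\mG(j\omega))=\overline{\phi}(\bcirc(\mG)(j\omega))$ and likewise for $\underline{\phi}$ and for $\mH$. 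Finally, $\bcirc(\mG),\bcirc(\mH)\in\mathcal{RH}_\infty^{mp\times mp}$ by the stated equivalence defining $\mathcal{RH}_\infty^{m\times m\times p}$, and they are real-rational because the tensor entries are.

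The phase inequalities in the statement therefore become, for all $\omega\in[0,\infty]$,
\[
\overline{\phi}(\bcirc(\mG)(j\omega))+\overline{\phi}(\bcirc(\mH)(j\omega))<\pi,\qquad
\underline{\phi}(\bcirc(\mG)(j\omega))+\underline{\phi}(\bcirc(\mH)(j\omega))>-\pi .
\]
These are precisely the hypotheses of the matrix small phase theorem. The restriction to $\omega\ge 0$ is sufficient there because of conjugate symmetry, $\bcirc(\mG)(-j\omega)=\overline{\bcirc(\mG)(j\omega)}$. Under conjugation the phases flip sign, so the two inequalities swap roles at negative frequencies. Invoking that theorem yields stability of $\bcirc(\mG)\widetilde{\#}\bcirc(\mH)$. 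Lemma~\ref{lem:feedback_bcirc_equiv} then gives stability of $\mG\widetilde{\#}\mH$.

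The main obstacle is checking that the matrix theorem's notion of frequency-wise quasi/semi-sectorial (phases defined on a continuous branch, including at $\omega=\infty$ and possibly with boundary cases where $\mH$ is only semi-sectorial) matches the tensor definitions here. If a direct citation is uncomfortable, I would fall back on the standard proof. Consider the homotopy $\tau\bcirc(\mH)$ for $\tau\in[0,1]$ and show $\det(\bm I+\tau\bcirc(\mH)(j\omega)\bcirc(\mG)(j\omega))\ne0$ for all $\tau$ and $\omega$. This uses the product-eigenvalue phase bound from the lemma after Theorem~\ref{thm:statements}(b), namely $\angle\lambda(\mA*\mB)\prec\phi(\mA)+\phi(\mB)$, extended to the semi-sectorial case by a small perturbation argument. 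It shows that $-1/\tau$ can never be an eigenvalue of $\bcirc(\mH)\bcirc(\mG)$, since all eigenvalue phases lie strictly in $(-\pi,\pi)$. The generalized Nyquist criterion then concludes stability.
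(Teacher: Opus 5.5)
Your proposal is correct and follows essentially the same route as the paper: transfer the phase hypotheses to $\bcirc(\mG)$ and $\bcirc(\mH)$ via $\phi(\mZ)=\phi(\bcirc(\mZ))$, invoke the matrix small phase theorem (the paper cites \citep[Theorem~1]{chen2019phase}), and pull stability back with Lemma~\ref{lem:feedback_bcirc_equiv}. The only cosmetic difference is that the paper first uses conjugate symmetry to extend the two inequalities explicitly to all $\omega\in\R$ before citing the matrix result, whereas you leave that step inside the matrix theorem; your homotopy/Nyquist fallback is not needed.
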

\begin{proof}
    For $\omega\ge 0$, the assumptions give
    \[
    \overline{\phi}(\mG(j\omega)) + \overline{\phi}(\mH(j\omega)) < \pi,\qquad
    \underline{\phi}(\mG(j\omega)) + \underline{\phi}(\mH(j\omega)) > -\pi.
    \]
    Since $\mG(-j\omega)=\overline{\mG(j\omega)}$ and $\mH(-j\omega)=\overline{\mH(j\omega)}$, canonical phases satisfy
    \[
    \overline{\phi}(\mG(-j\omega))=-\underline{\phi}(\mG(j\omega)),\quad
    \underline{\phi}(\mG(-j\omega))=-\overline{\phi}(\mG(j\omega)),
    \]
    and similarly for $\mH$. Therefore, the above two inequalities on $[0,\infty)$ imply
    \[
    \overline{\phi}(\mG(j\omega)) + \overline{\phi}(\mH(j\omega)) < \pi,\qquad
    \underline{\phi}(\mG(j\omega)) + \underline{\phi}(\mH(j\omega)) > -\pi
    \]
    for all $\omega\in\mathbb{R}$.
    Using $\phi(\mZ)=\phi(\bcirc(\mZ))$, the same phase bounds hold for $\bcirc(\mG)$ and $\bcirc(\mH)$.
    Since $\bcirc(\mG),\bcirc(\mH)\in\mathcal{RH}_{\infty}^{mp\times mp}$, \citep[Theorem 1]{chen2019phase} yields stability of $\bcirc(\mG)\widetilde{\#}\bcirc(\mH)$.
    By the preceding lemma (equivalence under $\bcirc$), $\mG\widetilde{\#}\mH$ is stable.
\end{proof}

The small phase theorem gives a sufficient, but generally not necessary, condition for stability.
One can formulate variants under weaker assumptions (for instance, $\mG$ frequency-wise quasi-sectorial and $\mH$ frequency-wise semi-sectorial), and in certain settings necessary-and-sufficient phase conditions are available; see \citep{chen2024phase2} for a representative result.

\section*{ Declaration of competing interest}
The authors declare that they have no competing interests.

\section*{ Data availability}
No external dataset was used.
The scripts for tensor generation, phase truncation, and experiment logs (including random seeds and environment information) will be released with the final camera-ready version.

\section*{ Acknowledgments}
The authors contributed equally to this paper.

\section*{ Funding}
H.~Choi and T.~Kim were supported by the National Research Foundation of Korea (NRF) grant funded by the Korea government (MSIT) (Nos.~2022R1A5A1033624 and RS-2024-00342939).
T.~Kim was additionally supported by the National Research Foundation of Korea (NRF) grant funded by the Korea government (MSIT) (No.~RS-2025-25436769).
Yimin Wei is supported by the National Natural Science Foundation of China under grant 12271108, the Ministry of Science and Technology of China under grant H20240841 and the Joint Research Project between China and Serbia under the grant 2024-6-7. We would like to thank Prof. Li Qiu and Mr. Chengdong Liu for their useful suggestions.

\bibliographystyle{unsrtnat}
\bibliography{references}

\end{document}